\title{Critical scaling profile for trees and connected subgraphs
\\
 on the complete graph}
\author{
Yucheng Liu\,\orcidlink{0000-0002-1917-8330}\thanks{Department of Mathematics,
	University of British Columbia,
	Vancouver, BC, Canada V6T 1Z2.
	Liu: \href{mailto:yliu135@math.ubc.ca}{yliu135@math.ubc.ca}.
	Slade: \href{mailto:slade@math.ubc.ca}{slade@math.ubc.ca}.
	}
\and
Gordon Slade\,\orcidlink{0000-0001-9389-9497}$^*$
}
\date{\vspace{-5ex}} 
\theoremstyle{plain}
\newtheorem{theorem}{Theorem}[section]
\newtheorem{lemma}[theorem]{Lemma}
\newtheorem{conjecture}[theorem]{Conjecture}
\newtheorem{proposition}[theorem]{Proposition}
\numberwithin{equation}{section}
\newcommand{\eps}{\varepsilon}
\newcommand{\Z}{\mathbb{Z}}
\newcommand{\R}{\mathbb{R}}
\newcommand{\K}{\mathbb{K}}
\renewcommand{\P}{\mathbb{P}}
\newcommand{\T}{\mathbb{T}}
\newcommand{\del}{\partial}
\newcommand{\inv}{^{-1}}
\newcommand{\half}{\frac{1}{2}}
\newcommand{\nl}{\nonumber \\}
\DeclareMathOperator\arctanh{arctanh}
\providecommand{\abs}[1]{\lvert#1\rvert}
\newcommand{\supk}[1]{^{(#1)}}
\newcommand{\supperc}{^{\rm {perc}}}
\newcommand{\nnb}{\nonumber\\}
\newcommand{\const}{\mathrm{const}}
\begin{document}
\maketitle

\begin{abstract}
We analyse generating functions for trees and
for connected subgraphs on the complete graph,
and identify a single scaling profile which applies for
both generating functions
in a critical window.
Our motivation comes from the analysis of the finite-size
scaling of lattice trees
and lattice animals on a high-dimensional discrete torus, for which we
conjecture that the identical profile applies in dimensions $d \ge 8$.
\end{abstract}




\section{Main results}

\subsection{Results}

The enumeration of trees and connected graphs has a long history.
We are motivated by problems arising in the critical behaviour of
branched polymers
in equilibrium statistical mechanics, to consider certain generating
functions for the number of trees and connected subgraphs in the complete graph $\K_V$
on $V$ labelled vertices.  The vertices are labelled as
$\mathbb V = \{ 0, \dots, V-1 \}$ and the edge set is
$\mathbb E = \{ \{ x,y \}: x,y \in \mathbb V ,\ x\neq y\}$.
Our interest is in the asymptotic behaviour as $V \to \infty$.

We define \emph{one-point functions}
\begin{align}
\label{eq:G0def}
G_{V,0}^t(p) = \sum_{T \ni 0}   \Bigl(\frac{p}{eV} \Bigr)^{\abs T} ,
\qquad
G_{V,0}^a(p) = \sum_{A \ni 0}   \Bigl(\frac{p}{eV} \Bigr)^{\abs A} ,
\end{align}
where the first sum is over all labelled trees $T$ in $\K_V$ containing the vertex $0$,
the second sum is over all labelled connected subgraphs $A$ containing $0$,
and $|T|$ and $|A|$ denote the number of edges in $T$ and $A$.
The division of $p$ by $eV$ is a normalisation to make
$p=1$ correspond to a critical value.
We also study the
\emph{two-point functions}
\begin{align}
\label{eq:G01def}
G_{V,01}^t(p) = \sum_{T \ni 0,1}   \Bigl(\frac{p}{eV} \Big)^{\abs T} ,
\qquad
G_{V,01}^a(p) = \sum_{A \ni 0,1}   \Bigl(\frac{p}{eV} \Big)^{\abs A} ,
\end{align}
where the sums now run over trees or connected subgraphs containing the distinct
vertices $0,1$.
To avoid repetition, when a formula applies to both
trees and connected subgraphs we often omit the superscripts $t,a$.
With this convention, we define the \emph{susceptibility}
\begin{equation}
\label{eq:chidef}
    \chi_V(p) = G_{V,0}(p)+(V-1)G_{V,01}(p).
\end{equation}
We are particularly interested in values of $p$ in a \emph{critical window}
$p=1+sV^{-1/2}$ around the critical point, with $s \in \R$.

We define the profile
\begin{equation}
\label{eq:Iprofile}
    I(s)  =
    \frac{e}{\sqrt{2\pi}}\int_0^\infty e^{-\frac 12 x^2+sx}\frac{1}{\sqrt{x}}
    \mathrm d x
    \qquad (s \in \R).
\end{equation}
The profile can be rewritten in terms of
a \emph{Fax\'en integral} \cite[p.~332]{Olve97} as
$I (s) = e\pi^{-1/2}2^{-5/4} \mathrm{Fi} ( \frac{1}{2}, \frac{1}{4} ; \sqrt{2} s )$,
and its asymptotic behaviour is given by
\cite[Ex.~7.3, p.~84]{Olve97} to be
\begin{equation}
    I (s)
    \sim
    \begin{cases}
    e |2s|^{-1/2} & (s \to -\infty)
    \\
    e s^{-1/2}e^{s^2/2} & (s \to +\infty) ,
    \end{cases}
\end{equation}
where $f \sim g$ means $\lim f/g = 1$.
Our main result is the following theorem.

\begin{theorem} \label{thm:profile}
For both trees and connected subgraphs, and for all $s \in \R$,
as $V \to \infty$ we have
\begin{align}
\label{eq:G01-profile}
     G_{V,01}(1 + sV^{-1/2})
    & \sim V^{-3/4} I(s)    ,
\\
\label{eq:LTK-profile}
    \chi_V(1 + sV^{-1/2})
    &\sim
    V^{1/4} I(s)
    .
\end{align}
\end{theorem}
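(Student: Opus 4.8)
The plan is to reduce both asymptotics to Laplace-type analysis of explicit sums over the number $k$ of vertices. For trees, Cayley's formula gives $k^{k-2}$ spanning trees on a prescribed $k$-set, so with $x=p/(eV)$ we have $G^t_{V,01}(p)=\sum_{k=2}^V\binom{V-2}{k-2}k^{k-2}x^{k-1}$, and since $\chi_V(p)=\sum_{y\in\mathbb V}G_{V,0y}(p)=\sum_{A\ni 0}\abs{V(A)}\,x^{\abs A}$ (with $G_{V,00}:=G_{V,0}$, which recovers $G_{V,0}+(V-1)G_{V,01}$), also $\chi^t_V(p)=\sum_{k=1}^V k\binom{V-1}{k-1}k^{k-2}x^{k-1}$. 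Writing $C_{k,j}$ for the number of connected graphs on $[k]$ with $j$ edges and $C_k(x)=\sum_j C_{k,j}x^j$, the same bookkeeping gives $G^a_{V,01}(p)=\sum_k\binom{V-2}{k-2}C_k(x)$ and $\chi^a_V(p)=\sum_k k\binom{V-1}{k-1}C_k(x)$.

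\emph{Trees.} Put $p=1+sV^{-1/2}$. Using $\binom{V-1}{k-1}=\frac{V^{k-1}}{(k-1)!}\prod_{i=1}^{k-1}(1-\tfrac iV)$, Stirling for $(k-1)!$, the estimate $\prod_{i=1}^{k-1}(1-\tfrac iV)=e^{-k^2/(2V)}(1+o(1))$ and $p^{k-1}=e^{sk/\sqrt V}(1+o(1))$ — all uniform on the window $k\asymp\sqrt V$ — the $k$th summand of $\chi^t_V$ equals $\frac{e}{\sqrt{2\pi}}k^{-1/2}e^{-k^2/(2V)+sk/\sqrt V}(1+o(1))$ and that of $G^t_{V,01}$ equals $\frac{e}{\sqrt{2\pi}}\cdot\frac{1}{V\sqrt k}\,e^{-k^2/(2V)+sk/\sqrt V}(1+o(1))$. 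Substituting $k=u\sqrt V$ turns the two sums into Riemann sums for $\frac{e}{\sqrt{2\pi}}V^{1/4}\int_0^\infty u^{-1/2}e^{-u^2/2+su}\,du=V^{1/4}I(s)$ and $\frac{e}{\sqrt{2\pi}}V^{-3/4}\int_0^\infty u^{-1/2}e^{-u^2/2+su}\,du=V^{-3/4}I(s)$. Two points need care: the contributions of $k\le\delta\sqrt V$ and $k\ge\delta^{-1}\sqrt V$ must be shown to be $o(V^{1/4})$ resp.\ $o(V^{-3/4})$ after letting $\delta\to 0$, and dominated convergence of the Riemann sums must be justified. Both follow from the elementary envelope $\binom{V-1}{k-1}k^{k-2}x^{k-1}\le \frac{e\,p^{k-1}}{\sqrt{2\pi}k^{3/2}}e^{-k(k-1)/(2V)}$ (from $1-t\le e^{-t}$ and $k!\ge\sqrt{2\pi k}(k/e)^k$, valid for all $k\le V$), whose rescaled version is dominated by $u^{-1/2}e^{-u^2/2+su}$, integrable on $(0,\infty)$ since the singularity at $u=0$ is only $u^{-1/2}$ — provided one retains the Gaussian factor $e^{-k^2/(2V)}$, which is essential when $s>0$. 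This proves Theorem~\ref{thm:profile} for trees.

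\emph{Connected subgraphs.} Set $R_k(x)=C_k(x)-k^{k-2}x^{k-1}=x^{k-1}\sum_{\ell\ge 1}C_{k,k-1+\ell}x^\ell\ge 0$; it suffices to show $\sum_k\binom{V-2}{k-2}R_k(x)=o(V^{-3/4})$ and $\sum_k k\binom{V-1}{k-1}R_k(x)=o(V^{1/4})$, for then the animal sums are asymptotic to the tree sums. Since every connected graph contains a spanning tree, $C_{k,k-1+\ell}\le k^{k-2}\binom{\binom k2-(k-1)}{\ell}$, hence $C_k(x)\le k^{k-2}x^{k-1}(1+x)^{\binom k2}\le k^{k-2}x^{k-1}e^{xk^2/2}$; because $p/e<1$ for $V$ large, this upgrades the tree envelope to $\binom{V-2}{k-2}C_k(x)\le \frac{e\,p^{k-1}}{\sqrt{2\pi}(V-1)k^{1/2}}e^{-k(k-1)/(4V)}$, which shows as in the tree case that the $k$ outside $[\delta\sqrt V,\delta^{-1}\sqrt V]$ contribute negligibly. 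Inside the window one needs $R_k(x)/(k^{k-2}x^{k-1})=\sum_{\ell\ge 1}C_{k,k-1+\ell}x^\ell/k^{k-2}\to 0$ uniformly. The term $\ell=1$ uses the elementary estimate $C_{k,k}\le \tfrac{k^{k-1}}{2}\sum_{j\ge 3}\prod_{i=1}^{j-1}(1-\tfrac ik)=O(k^{k-1/2})$ for connected unicyclic graphs (from the forest form of Cayley's formula), giving a term of size $O(k^{3/2}x)=O_\delta(V^{-1/4})$. For $2\le\ell\le k^{\beta}$ with $\beta>0$ small, one invokes Wright's asymptotic enumeration of connected graphs of given excess, in the form $C_{k,k-1+\ell}\le \ell^{c\ell}k^{k-2+3\ell/2}$, so these terms sum to $\sum_{\ell\ge 2}(\ell^{c}k^{3/2}x)^\ell=O_\delta(V^{-\kappa})$ for some $\kappa>0$ (as $k^{3/2}x=O_\delta(V^{-1/4})$ and $\ell$ is a small power of $V$); and for $\ell>k^{\beta}$ the crude bound gives terms $\le (xk^2/2)^\ell/\ell!\le M_\delta^\ell/\ell!$ with $M_\delta$ bounded for fixed $\delta$, whose tail from $\ell=\lceil k^{\beta}\rceil$ is super-exponentially small. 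Hence $R_k(x)=k^{k-2}x^{k-1}\,o(1)$ uniformly on the window for each fixed $\delta$; combined with $G^a_{V,01}\ge G^t_{V,01}$ and $\chi^a_V\ge\chi^t_V$, letting $V\to\infty$ and then $\delta\to 0$ yields $G^a_{V,01}\sim V^{-3/4}I(s)$ and $\chi^a_V\sim V^{1/4}I(s)$.

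The tree case is a routine Laplace/Euler--Maclaurin computation, only mildly delicate because of the integrable singularity of $I$ at the origin and the need to keep the Gaussian factor for $s>0$. The main obstacle is the connected-subgraph case: the elementary ``spanning tree plus extra edges'' bound is too weak on the scaling window — it gives boundedness of $C_k(x)/(k^{k-2}x^{k-1})$ but not convergence to $1$ — so one genuinely needs the finer enumeration of barely-connected graphs, namely Wright's constants $\rho_\ell\asymp(c\ell)^{\ell/2}$, together with a careful splitting of the excess $\ell$, and of $k$, into a regime where these asymptotics apply and tails where crude estimates suffice.
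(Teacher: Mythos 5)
Your tree argument is essentially the paper's: a Laplace-type Riemann-sum computation on the window $k\asymp\sqrt V$, with a Gaussian envelope controlling both tails; nothing to add there.

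For connected subgraphs your overall strategy coincides with the paper's (show the surplus-edge contribution is asymptotically negligible relative to the tree contribution), but the decompositions differ, and there is a genuine gap. What you do differently, and nicely: you bound the \emph{entire} sum $\binom{V-2}{k-2}C_k(x)$ outside the window by the tree envelope multiplied by $e^{xk^2/2}$, which, since $p/e<1$, costs only a harmless weakening of the Gaussian factor. This is cleaner than the paper's tail treatment, which estimates the surplus alone via Lemma~\ref{lemma:sparse}(ii) and therefore needs the sharp constant $e$ in Proposition~\ref{prop:Cnk-intro} to make the surviving exponential lose to the Gaussian. The gap is inside the window: the asserted bound $C_{k,k-1+\ell}\le\ell^{c\ell}k^{k-2+3\ell/2}$ for $2\le\ell\le k^\beta$, which you attribute to ``Wright's asymptotic enumeration of connected graphs of given excess'', does not follow from Wright's theorem, which is a fixed-$\ell$ statement as $k\to\infty$. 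Your argument genuinely needs uniformity over a range of $\ell$ that is unbounded as $V\to\infty$: for fixed $\delta$ the cutoff $L$ between your Wright regime and your crude Poisson-tail regime must tend to infinity with $V$, since otherwise $\sum_{\ell>L}M_\delta^\ell/\ell!$ is a fixed positive number (depending on $\delta$) rather than $o(1)$. Producing a bound of the required uniform form is exactly the content of the paper's Proposition~\ref{prop:Cnk-intro}, extracted from the Bender--Canfield--McKay formula \cite{BCM90}; alternatively one could cite the uniform sparse-connected-graph estimates of Bollob\'as or {\L}uczak. So you have the right shape of argument but are missing the key enumeration lemma, which is the crux of the paper's contribution for animals. (A minor side remark: Wright's constants actually \emph{decay} super-exponentially in $\ell$, roughly like $\ell^{-\ell/2}$, rather than grow like $\ell^{c\ell}$; your overestimate is harmless since it is used only as an upper bound, but it is why you had to restrict to $\ell\le k^\beta$ with $\beta$ small, which a correct bound would render unnecessary.)
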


The proof of Theorem~\ref{thm:profile} uses a uniform bound on the
one-point function.
The following theorem gives a statement that is more precise than a bound.
It involves
the principal branch $W_0$ of the Lambert function \cite{CGHJK96}, which
solves $W_0e^{W_0} =z$ and has power series
\begin{equation}
\label{eq:Lambert}
    W_0(z) = \sum_{n=1}^\infty \frac{(-n)^{n-1}}{n!}z^n.
\end{equation}
The solution to $W_0e^{W_0} = -1/e$ is achieved by the
particular value $W_0(-1/e)=-1$.

\begin{theorem}
\label{thm:G0}
For both trees and connected subgraphs, for all $s \ge 0$, and for all sequences
$p_V$ with $p_V \le 1+sV^{-1/2}$ and $\lim_{V\to\infty}p_V=p\in [0,1]$,
\begin{align}
\label{eq:G0lim}
    \lim_{V\to\infty} G_{V,0}(p_V)
    &=
    \sum_{n=1}^\infty \frac{ n^{n-1} }{n!} \Big( \frac{p}{e} \Big)^{n-1}
    =
    -\frac{e}{p}W_0\Big(\!\!-\frac{p}{e} \Big)
    .
\end{align}
In particular, if $p=1$ then
$\lim_{V\to\infty} G_{V,0}(p_V) = e$.
\end{theorem}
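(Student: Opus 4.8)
The plan is to express each one-point function as an \emph{exact} finite sum, pass to the limit term by term, and control the tail by a dominated-convergence argument. By Cayley's formula, a tree $T\ni 0$ in $\K_V$ with $k$ vertices contributes $\binom{V-1}{k-1}$ (choices of the remaining vertices) times $k^{k-2}$ (choices of edges) times $(p/(eV))^{k-1}$, so
\begin{equation}
    G_{V,0}^t(p)=\sum_{k=1}^{V}\binom{V-1}{k-1}k^{k-2}\Bigl(\frac{p}{eV}\Bigr)^{k-1}.
\end{equation}
More generally, if $c_{k,j}$ denotes the number of connected labelled graphs on $k$ vertices with $j$ edges and $C_k(q)=\sum_{j=k-1}^{\binom{k}{2}}c_{k,j}q^j$ (so $c_{k,k-1}=k^{k-2}$ by Cayley's formula, and $C_k(q)=k^{k-2}q^{k-1}$ in the tree case), then $G_{V,0}(p)=\sum_{k=1}^{V}\binom{V-1}{k-1}C_k(p/(eV))$. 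Writing $b_k:=k^{k-2}/((k-1)!\,e^{k-1})=k^{k-1}/(k!\,e^{k-1})$, the power series \eqref{eq:Lambert} gives $\sum_{k=1}^{\infty}b_k x^{k-1}=-(e/x)W_0(-x/e)$ for $0\le x\le 1$, and in particular $\sum_{k=1}^{\infty}b_k=-eW_0(-1/e)=e$ by the value $W_0(-1/e)=-1$. Hence it suffices to show $\lim_{V\to\infty}\sum_{k=1}^{V}\binom{V-1}{k-1}C_k(p_V/(eV))=\sum_{k=1}^{\infty}b_k p^{k-1}$, from which \eqref{eq:G0lim} and the case $p=1$ follow.

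For the pointwise limit, fix $k$. Since $\binom{V-1}{k-1}V^{-(k-1)}=\prod_{j=1}^{k-1}(1-j/V)\to 1/(k-1)!$ and $p_V\to p$, the contribution of the $j=k-1$ term in $C_k(p_V/(eV))$ tends to $b_k p^{k-1}$, while each $j\ge k$ term is $O(V^{k-1-j})=O(V^{-1})\to 0$; so the $k$-th summand tends to $b_k p^{k-1}$. For the domination I would use three elementary estimates, valid for $s\ge 0$ and all sufficiently large $V$ (so that $0\le p_V\le 1+sV^{-1/2}$): $\prod_{j=1}^{k-1}(1-j/V)\le e^{-k(k-1)/(2V)}$; $p_V^{k-1}\le e^{s(k-1)V^{-1/2}}$; and, for connected subgraphs, the spanning-tree bound
\begin{equation}
    C_k(q)\le k^{k-2}q^{k-1}(1+q)^{\binom{k}{2}},
\end{equation}
valid because every connected graph on $\{1,\dots,k\}$ consists of a spanning tree together with some subset of the $\binom{k}{2}-(k-1)$ remaining edges. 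Combining these with $(1+p_V/(eV))^{\binom{k}{2}}\le e^{p_Vk(k-1)/(2eV)}$ and the fact that $p_V/e\le 1-2c$ for large $V$ (for a suitable constant $c>0$; one may take $c=1/2$ for trees, where the $(1+q)^{\binom{k}{2}}$ factor is absent) gives, for all sufficiently large $V$ and all $k\le V$,
\begin{equation}
    \binom{V-1}{k-1}C_k\Bigl(\frac{p_V}{eV}\Bigr)\le b_k\exp\!\Bigl(\frac{s(k-1)}{\sqrt V}-c\,\frac{(k-1)^2}{V}\Bigr)\le b_k\,e^{s^2/(4c)},
\end{equation}
the last step by completing the square in $k-1$. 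Since $\sum_k b_k<\infty$ and this bound is independent of $V$, dominated convergence for series yields $\lim_{V\to\infty}\sum_k\binom{V-1}{k-1}C_k(p_V/(eV))=\sum_k b_k p^{k-1}$, which equals $-(e/p)W_0(-p/e)$ for $p\in(0,1]$ and equals $1$ for $p=0$ (matching the series); taking $p=1$ gives $\lim_{V\to\infty} G_{V,0}(p_V)=e$.

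The step I expect to be the main obstacle is the domination. One must verify that the Gaussian-type decay $\binom{V-1}{k-1}V^{-(k-1)}\le e^{-k(k-1)/(2V)}$ supplied by the binomial coefficient is strong enough to absorb simultaneously the growth $(1+sV^{-1/2})^{k-1}$ (from evaluating slightly above the critical value) and, for connected subgraphs, the factor $(1+p_V/(eV))^{\binom{k}{2}}$ coming from the extra edges beyond a spanning tree. This is exactly where the $V^{-1/2}$ width of the critical window and the normalisation by $eV$ are used, and where $p\le 1$ (hence $p/e<1$) is needed so that a strictly negative quadratic term $-c(k-1)^2/V$ with $c>0$ remains after all factors are combined.
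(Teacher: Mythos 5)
Your proposal is correct, and it takes a genuinely different and more elementary route than the paper, especially for connected subgraphs. The paper proves Theorem~\ref{thm:G0} by splitting the sum over $n$ into ranges $[1,N]$, $(N,b\sqrt V]$, $(b\sqrt V, V]$ and estimating the middle and tail regions; for connected subgraphs it then isolates the tree contribution and shows the surplus correction $\Delta_{V,0}$ vanishes using the bounds on the surplus generating function derived (via Proposition~\ref{prop:Cnk-intro}) from the Bender--Canfield--McKay asymptotics of \cite{BCM90}. You instead treat trees and connected subgraphs uniformly: the elementary spanning-tree bound $C_k(q)\le k^{k-2}q^{k-1}(1+q)^{\binom k2}$, together with $\prod_{j<k}(1-j/V)\le e^{-k(k-1)/(2V)}$ and $p_V^{k-1}\le e^{s(k-1)/\sqrt V}$, yields (for large $V$) the $V$-independent, summable majorant $b_k e^{s^2/(4c)}$, after which dominated convergence for series finishes the proof. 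This is cleaner and avoids \cite{BCM90} entirely. The trade-off is that your dominating sequence is $\sim k^{-3/2}$, so it does not survive the extra factor of $n$ appearing in the susceptibility: the same scheme applied to $\chi_V$ would require the sharper control on $C(n,n+k)$ that Proposition~\ref{prop:Cnk-intro} provides. In other words, your argument gives an economical, self-contained proof of Theorem~\ref{thm:G0}, but the heavier machinery in the paper is still needed for Theorem~\ref{thm:profile}; since the paper already sets up that machinery, it reuses it for Theorem~\ref{thm:G0} rather than giving the lighter argument you found.

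One small tidiness note: your spanning-tree bound can be stated with exponent $\binom{k}{2}-(k-1)$ (the extra edges not in the chosen tree), though your looser version $(1+q)^{\binom k2}$ is of course still valid and suffices. Also, be explicit that the domination only needs to hold for $V\ge V_0(s)$, which is where $p_V/e$ is bounded strictly below $1$ so that the coefficient $c>0$ in $-c(k-1)^2/V$ is uniform; you did flag this, and it is correct.
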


\noindent
\textbf{Notation:} We write $f \lesssim g$ if there is a $C>0$ such that $f(x) \le C g(x)$
for all $x$ of interest.

\subsection{Method of proof}

To prove \eqref{eq:G01-profile}, it suffices to prove
\eqref{eq:LTK-profile} and \eqref{eq:G0lim}, since
when $p=1+sV^{-1/2}$, by definition of $\chi_V$
we then have
\begin{equation}
\label{eq:Gchi-ratio}
    G_{V,01} = \frac{ \chi_V - G_{V,0} } {V - 1 } \sim \frac {\chi_V} V .
\end{equation}

\subsubsection{Trees}

By Cayley's formula, the number of trees on $n$ labelled vertices is $n^{n-2}$.
By decomposing the sum defining $G_{V,0}^t(p)$ according to the number $n$ of vertices in the tree,
and by counting the number of ways to choose $n-1$ vertices other than $0$, we have
\begin{align} \label{eq:G0}
    G_{V,0}^t(p) &= \sum_{n=1}^{V}
	\binom{V-1}{n-1}
	n^{n-2}  \Big(\frac{p}{eV} \Big)^{n-1} .
\end{align}
Similarly,
by counting the number of ways to choose $n-2$ vertices other than $0$ and $1$, we have
\begin{equation} \label{eq:G01t}
    G_{V,01}^t(p) =
    \sum_{n=2}^{V}
	\binom{V-2}{n-2}
	n^{n-2}  \Big(\frac{p}{eV} \Big)^{n-1} .
\end{equation}
Since
\begin{equation}
\binom{V-1}{n-1} + (V-1) \binom{V-2}{n-2} = n\binom{V-1}{n-1},
\end{equation}
it follows from \eqref{eq:chidef} that the susceptibility is given by
\begin{align} \label{eq:chit}
\chi_{V}^t(p)
    &=
    \sum_{n=1}^{V}
	\binom{V-1}{n-1}
	n^{n-1}  \Big(\frac{p}{eV} \Big)^{n-1}.
\end{align}
For trees,
we prove Theorems~\ref{thm:profile}--\ref{thm:G0}
by directly analysing the above series for $\chi_V^t$ and $G_{V,0}^t$.
The profile $I(s)$ for $\chi_V^t(1 + sV^{-1/2})$ arises from a Riemann sum limit.

\subsubsection{Connected subgraphs}

For connected subgraphs,
we will show that the contribution to $\chi_V^a, G_{V,01}^a$
from connected subgraphs with cycles
is much smaller than the contribution from trees.
Let $C(n,n-1+\ell)$ denote the number
of connected graphs on $n$ labelled vertices with exactly $n-1+\ell$ edges,
i.e., with $\ell$ surplus edges.
The surplus must be zero for $n=1,2$.
For $n\ge 3$, we define the \emph{surplus generating function}
\begin{equation} \label{eq:def_S}
    S(n,z)  =
    \sum_{\ell= 1}^\infty C(n,n-1+\ell)
	z^{\ell} .
\end{equation}
Note that terms in the above sum are zero unless
$\ell \le \binom{n} 2 - (n-1)$,
and that the tree term ($\ell=0$) is absent.

We decompose the sums defining $G_{V,0}^a$ and $G_{V,01}^a$ according to the number $n$ of vertices in the connected subgraph, and we further distinguish whether or not the subgraph contains surplus edges.
This leads to the decomposition 
\begin{align}
    G_{V,0}^a(p) &=
    G_{V,0}^t(p)
    +  \Delta_{V,0}(p),
\\
    \chi_{V}^a(p) &=
    \chi_{V}^t(p)
    +
    \Delta_{V}(p),
\end{align}
with
\begin{align}
\label{eq:Delta0}
    \Delta_{V,0}(p) &=
    \sum_{n=3}^{V}
	\binom{V-1}{n-1}
	S\Big(n,\frac p {eV}\Big)
	\Big(\frac{p}{eV} \Big)^{n-1} ,
\\
\label{eq:chiDelta}
    \Delta_{V}(p) & =
    \sum_{n=3}^{V}
	\binom{V-1}{n-1} n
	S\Big(n,\frac p {eV}\Big)
	\Big(\frac{p}{eV} \Big)^{n-1}.
\end{align}
Given Theorems~\ref{thm:profile}--\ref{thm:G0} for trees,
we prove Theorems~\ref{thm:profile}--\ref{thm:G0} for connected subgraphs by showing that, for all $s\in \R$,
\begin{align}
\label{eq:Delta0bd}
    \lim_{V\to\infty} \Delta_{V,0}(1+sV^{-1/2}) & =0,
\\
\label{eq:Deltabd}
    \lim_{V\to\infty} V^{-1/4}\Delta_V(1 + sV^{-1/2})
    &=
    0.
\end{align}
The proof is more subtle than for trees
and requires estimates on the surplus generating function.
As we discuss later, a precise but cumbrous asymptotic formula for $C(n,n+k)$ is
given in \cite[Corollary~1]{BCM90}.  We use that formula to prove the following
useful explicit bound.
By convention, $k^k = 1$ when $k=0$.

\begin{proposition} \label{prop:Cnk-intro}
Let $n \ge 3$ and $N = \binom n 2$.
For $0 \le k  \le n$, we have
\begin{align} \label{eq:Cnk-intro}
C(n, n+k) \lesssim   \binom N {n+k}
	\bigg( \frac 2 e \bigg)^n   \bigg( \frac{ e n }{ k }\bigg)^{k/2} .
\end{align}
\end{proposition}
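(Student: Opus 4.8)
The plan is to substitute the precise asymptotic formula for $C(n,n+k)$ from \cite[Corollary~1]{BCM90} and then simplify it to the stated shape. That formula has the form $C(n,n+k)\sim\binom{N}{n+k}\,\Phi(n,k)$ as $n\to\infty$ (uniformly over $k$ in the relevant range), where $\Phi(n,k)$ is the asymptotic probability that the random graph $G(n,n+k)$ is connected: an explicit but cumbrous expression made of a sub-polynomial prefactor times an exponential factor of the form $e^{n h(k/n)}$, with $h$ determined through transcendental equations in the density parameter $k/n$. Since $C(n,n+k)\le\binom{N}{n+k}$ always, and since the Proposition only asks for a bound with an implied constant, it suffices to prove $\Phi(n,k)\lesssim(2/e)^n(en/k)^{k/2}$ for large $n$; the finitely many small $n$ and the $1+o(1)$ errors are absorbed into the constant.

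Two ranges of $k$ need nothing beyond $C(n,n+k)\le\binom{N}{n+k}$ and elementary estimates, and I would dispose of them first. Whenever the target factor is itself at least $1$ the bound is immediate; writing $\beta=k/n$, one has $(2/e)^n(en/k)^{k/2}\ge1$ exactly when $\tfrac\beta2\log(e/\beta)\ge1-\log2$, and since $\beta\mapsto\tfrac\beta2\log(e/\beta)$ is increasing on $(0,1]$ (its derivative is $-\tfrac12\log\beta$) with value $\tfrac16(1+\log3)>1-\log2$ at $\beta=\tfrac13$, this covers all $n/3\le k\le n$. The case $k=0$ is done by hand: decomposing a connected unicyclic graph into its unique cycle with rooted trees attached gives $C(n,n)=\tfrac12\,n^{n-1}\sum_{j=3}^{n}\tfrac{n!}{(n-j)!\,n^{j}}$, and since the sum is $\lesssim n^{1/2}$ while Stirling gives $\binom{N}{n}\ge c\,n^{-1/2}e^{n}2^{-n}n^{n}$, we obtain $C(n,n)\lesssim\binom{N}{n}(2/e)^n$, which is the assertion (recall $(en/k)^{k/2}=1$ at $k=0$). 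What remains is the main range $1\le k<n/3$.

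In that range I would insert the \cite{BCM90} formula, pass to logarithms, and set $\beta=k/n\in(0,\tfrac13)$; the inequality to be proved then collapses to
\begin{equation*}
  h(\beta)\;\le\;\log\tfrac2e+\tfrac\beta2\log\tfrac e\beta\qquad(0<\beta<\tfrac13),
\end{equation*}
together with the routine observations that the sub-polynomial prefactor in $\Phi$ is $n^{O(1)}$ and hence negligible against any strictly negative exponential gap, and that near $\beta=0$, where both sides tend to $\log\tfrac2e$, the leading $O(\beta\log(1/\beta))$ corrections have the correct sign. This last inequality is the crux and the main obstacle: $h$ is not elementary — it is assembled from roots of transcendental equations parametrised by $\beta$ (the $2$-core size and related quantities in \cite{BCM90}) — so one must either extract enough monotonicity and convexity of those implicit functions from their defining equations to push the comparison through, or bypass $h$ altogether by an elementary over-count. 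For the latter, every connected graph on $n$ vertices with $n+k$ edges is its $2$-core — a connected graph of the same excess with minimum degree at least $2$, equivalently a subdivision of a kernel of minimum degree at least $3$ — with a rooted forest attached; bounding the number of $2$-cores of given excess via the configuration model and optimising over the size of the $2$-core should reproduce exactly the factors $(2/e)^n$ and $(en/k)^{k/2}$, and would also cover any sub-range of $\beta$ not handled uniformly by \cite{BCM90}. Either way the Proposition reduces to calculus inequalities, with uniformity as $\beta\to0$ the delicate point.
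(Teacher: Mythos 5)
Your overall strategy matches the paper's: both start from the Bender--Canfield--McKay asymptotic formula and aim to bound the exponential density factor. Your preliminary reductions are sound and in fact a little cleaner than the paper's treatment. For $n/3 \le k \le n$ the crude bound $C(n,n+k)\le\binom{N}{n+k}$ suffices because, writing $\beta=k/n$, the extra factor $(2/e)^n(en/k)^{k/2}$ is at least $1$ once $\tfrac{\beta}{2}\log(e/\beta)\ge 1-\log 2$, and your monotonicity computation correctly verifies this for $\beta\ge 1/3$. The $k=0$ case via unicyclic enumeration and a Ramanujan-type sum is also correct.

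The problem is that you stop at the crux. You reduce the main range $1\le k<n/3$ to the inequality $h(\beta)\le\log\tfrac{2}{e}+\tfrac{\beta}{2}\log\tfrac{e}{\beta}$, correctly flag it as ``the main obstacle,'' and then offer two possible routes (convexity/monotonicity analysis of the implicit functions, or a $2$-core over-count) without carrying out either. That is a plan, not a proof: the whole content of the Proposition lies in this inequality, and the remark that ``near $\beta=0$ the leading corrections have the correct sign'' does not establish it uniformly. The paper resolves it concretely in its Lemma~3.2. The key is to work not with $h(\beta)$ directly but with the implicit variable $y=y(x)$ defined by $x=y^{-1}\arctanh y$: one rewrites $\varphi(x)-\log\tfrac{2}{e}=(1-x)(1+\log y)-\tfrac12\log(1-y^2)$, Taylor-expands both terms in powers of $y^2$, bounds the resulting series term by term by a calculus maximisation, and arrives at $e^{\varphi(x)}\le\tfrac{2}{e}\exp\{-\tfrac13 y^2\log(y/t)\}$ with $t=\sqrt{3e}$; the bound $y\le\sqrt{3(x-1)}=\sqrt{3k/n}$ (from the same Taylor series) then gives exactly the factor $(en/k)^{k/2}$. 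The implicit function $y(x)$ is the right variable to expand in, and that change of perspective, together with the term-by-term estimate, is precisely what your proposal is missing. Your alternative $2$-core route could plausibly work but would be a genuinely different argument and would require its own nontrivial optimisation over the core size, which you have not supplied.
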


Proposition~\ref{prop:Cnk-intro} is most useful
when the surplus $\ell = k+1$ is small but of order $n$.
This is a delicate region when controlling the surplus generating function,
and the precise constant $e$ in the last factor of \eqref{eq:Cnk-intro} is important.
For a larger surplus, we simply bound $C(n,n+k)$
by the total number of graphs (connected or not) on $n$ vertices with
$n+k$ edges, which is $\binom{N}{n+k}$.
Together, these bounds provide enough control on $S(n,p/(eV))$ to
prove \eqref{eq:Delta0bd}--\eqref{eq:Deltabd}.

\subsection{Motivation}

Theorem~\ref{thm:profile} is motivated by a broader emerging theory of
finite-size scaling in statistical mechanical models above their upper
critical dimensions.  The theory involves a family of profiles expressed in terms of
the functions
\begin{equation}
    I_k(s) = \int_0^\infty x^{k} e^{-\frac 14 x^4 - \frac 12 s x^2} \mathrm dx
    \qquad
    (s\in \R, \; k>-1)   .
\end{equation}
A change of variables
transforms the profile $I$ of \eqref{eq:Iprofile} into
$I(s)= e 2^{1/4}\pi^{-1/2} I_0(-\sqrt{2}s)$.
The general theory is described in \cite{LPS25-universal} with
references to the extensive physics and mathematics literature.

Given an integer $d \ge 2$, infinite-volume models can be formulated on
a transitive graph $\mathbb{G}=(\Z^d,\mathbb{E})$, whose
edge set $\mathbb{E}$ has a finite number of edges containing the origin
and is invariant under the symmetries of $\Z^d$.
Above an \emph{upper critical dimension}
$d_{\rm c}$, for many models it has been proven that
the critical exponents that describe the critical behaviour are the same
as the corresponding exponents when the model is formulated on a
regular tree or on the complete graph.  The tree and complete graph settings
are easy to analyse.
Finite-volume models (with periodic boundary conditions)
can instead be formulated on a discrete torus $\mathbb{G}_r = (\T_r^d,\mathbb{E}_r)$
of period~$r$.  At and above the upper critical dimension, the torus models are known
or conjectured to have critical behaviour analogous to that seen on the complete
graph, with an interesting ``plateau'' phenomenon involving a universal profile
which is often expressed in terms of $I_k$.
The value of $k$ depends on the model.
Dimensions $d<d_{\rm c}$ are conjectured to exhibit different scaling, with no
plateau or profile.

\smallskip\noindent
{\bf Lattice trees and lattice animals:}
A \emph{lattice animal} is
a finite connected subgraph of $\mathbb{G}$, and a \emph{lattice tree} is
an acyclic lattice animal.
The critical behaviour of lattice trees and lattice animals is at least as difficult
as is the case for the notoriously difficult self-avoiding walk.
Despite significant interest
from chemists and physicists for over half a century,
due to applications to branched polymers \cite{Jans15},
the critical behaviour is understood mathematically only
in dimensions $d>d_{\rm c} = 8$.
For $d>8$, it has been proved using the lace expansion that for sufficiently
large edge sets $\mathbb{E}$ (or for nearest-neighbour edges with $d$ sufficiently
large), lattice trees and lattice animals at the critical point
both have the same behaviour as a critical branching process
\cite{CFHP23,DS98,HS90b,HS92c}.

For $x\in \Z^d$, let $c_m(x)$ denote the number of lattice trees or lattice animals containing
$0,x$ and having exactly $m$ bonds.  The one-point functions,
two-point functions, and
susceptibilities are defined by
\begin{equation}
    g(z) = \sum_{m=0}^\infty c_m(0)z^m,
    \quad
    G_z(x) = \sum_{m=0}^\infty c_m(x)z^m,
    \quad
    \chi(z) = \sum_{x\in \Z^d} G_z(x).
\end{equation}
The radius of convergence $z_{\rm c}$ (the \emph{critical point})
of these series is finite and positive, and is strictly
smaller for animals than for trees \cite{GPSW94}.
High-dimensional versions and extensions of Theorem~\ref{thm:G0} for $g(z_c)$ are
proved in \cite{KS24,MS13}.
The analogous quantities for trees and animals on
the torus $\T_r^d$ are denoted $g_r(z)$, $G_{r,z}(x)$, $\chi_r(z)$.
These are polynomials in $z$, so they define entire functions of $z$.
Nevertheless, for large $r$ the infinite-volume critical point $z_c$ plays
a role in the scaling.  We denote the volume of the torus by $V=r^d$.

Our computation of the profile $I$ for the two-point
function and susceptibility in Theorem~\ref{thm:profile}
supports the following conjecture from \cite{LS25a} that the profile $I_0$
(just a rescaled $I$) occurs
for both lattice trees and lattice animals on the torus,
above the upper critical dimension.

\begin{conjecture}
For lattice trees and lattice animals on $\T_r^d$ with $d>8$,
there are constants $a_d<0$ and $b_d>0$ (different constants for trees
and animals) such that, as $V=r^d \to \infty$,
\begin{equation}\begin{aligned}
\label{eq:LTLAplateau}
    G_{r,z_c+sV^{-1/2}}(x) -G_{z_c}(x)
    &\sim b_d V^{-3/4} I_0(a_d s),
    \\
    \chi_r(z_c+sV^{-1/2}) &\sim b_d V^{1/4} I_0(a_d s).
\end{aligned}\end{equation}
\end{conjecture}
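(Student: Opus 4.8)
The plan is to follow the route by which the plateau has been established or analysed for other high-dimensional models --- the Ising model, percolation, and the self-avoiding walk (see \cite{LPS25-universal} and the references there) --- namely, to build a lace expansion on $\T_r^d$ for lattice trees and lattice animals as a perturbation of the infinite-volume lace expansion available for $d>8$ \cite{HS90b,HS92c,DS98,CFHP23}, and then to isolate the zero Fourier mode, which alone carries the near-critical divergence. The two lines of \eqref{eq:LTLAplateau} are linked exactly as on the complete graph in \eqref{eq:Gchi-ratio}: using $G_{r,z}(x) = V^{-1}\chi_r(z) + V^{-1}\sum_{q\ne 0} e^{iq\cdot x}\hat G_{r,z}(q)$ (with $\hat G_{r,z}$ the Fourier transform on the dual torus, so $\hat G_{r,z}(0)=\chi_r(z)$), and the fact that for $d>8$ the non-zero-mode sum converges to the infinite-volume two-point function $G_{z_c}(x)$ with an error that does not reach order $V^{-3/4}$ (this uses $d>8$ strictly, not $d=8$: in the window $z=z_c+sV^{-1/2}$ the infinite-volume correlation length, of order $V^{1/8}$, exceeds the period $r=V^{1/d}$, so every non-zero mode lies well above the would-be mass scale), the first line of \eqref{eq:LTLAplateau} follows from the second. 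It therefore suffices to prove $\chi_r(z_c+sV^{-1/2}) \sim b_d V^{1/4} I_0(a_d s)$.

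For the susceptibility, write $\chi_r(z) = \sum_m a_m^{(r)} z^m$. The first task is to use the torus lace expansion to show that for $m$ up to $V^{1/2+o(1)}$ the coefficient $a_m^{(r)}$ agrees to leading order with its infinite-volume counterpart, which --- since the infinite-volume susceptibility satisfies $\chi(z) \sim \const\cdot(1-z/z_c)^{-1/2}$ (the mean-field value $\gamma=\tfrac12$) \cite{DS98,CFHP23} --- behaves like $\const\cdot z_c^{-m} m^{-1/2}$; and, crucially, that for $m$ of order $V^{1/2}$, the range dominating the window, the finite-volume corrections damp $a_m^{(r)}$ by a Gaussian factor, $a_m^{(r)} z_c^m \sim \const\cdot m^{-1/2} \exp(-\kappa_d m^2/(2V))$ for a model-dependent $\kappa_d>0$. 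This is the exact analogue of the factor $\binom{V-1}{n-1}(n-1)!\,/\,V^{n-1} \approx e^{-n^2/(2V)}$ that appears on the complete graph in \eqref{eq:chit}, and it is the mechanism by which the torus inherits the complete-graph behaviour. Granting it, put $m = V^{1/2} x$ and $z = z_c + sV^{-1/2}$, so that $z^m \approx z_c^m e^{(s/z_c)x}$, and pass to a Riemann-sum limit exactly as in the proof of Theorem~\ref{thm:profile} for $\chi_V^t$:
\begin{equation*}
    \chi_r(z_c + sV^{-1/2}) \sim \const\cdot V^{1/4} \int_0^\infty \frac{1}{\sqrt x}\, e^{-\kappa_d x^2/2 + (s/z_c)x}\, \mathrm d x .
\end{equation*}
A rescaling of $x$ identifies this integral with $I$, hence with $I_0$ via $I(s) = e\,2^{1/4}\pi^{-1/2} I_0(-\sqrt 2 s)$, and yields $\chi_r(z_c+sV^{-1/2}) \sim b_d V^{1/4} I_0(a_d s)$ with $a_d<0$, the sign being forced because $\chi_r$ is increasing in $s$ while $I_0$ is decreasing. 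Equivalently, one may characterise the limit profile $\psi(s) = \lim_V V^{-1/4}\chi_r(z_c+sV^{-1/2})$ as the unique solution of the rescaled Fax\'en equation $\psi'' = c_d(s\psi' + \tfrac12\psi)$ (with $c_d>0$) having the matching behaviour $\psi(s)\sim\const\cdot|s|^{-1/2}$ as $s\to-\infty$ and $\psi(s)\sim\const\cdot s^{-1/2}e^{c_d s^2/2}$ as $s\to+\infty$.

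I expect the main obstacle to be establishing and controlling the lace expansion on $\T_r^d$ for lattice trees and lattice animals, rather than the extraction of the profile once that control is in hand. Three difficulties stand out. First, the Hara--Slade lace expansion for trees and animals is combinatorially heavier than for percolation or the self-avoiding walk, and a torus version --- together with diagrammatic bounds comparing the torus diagrams to the $\Z^d$ diagrams plus a quantified finite-volume remainder --- has not been carried out. Second, the bounds must be uniform for $z$ throughout $[z_c-CV^{-1/2},z_c+CV^{-1/2}]$, including $z>z_c$ where the infinite-volume objects diverge, so the bootstrap argument must be run directly on the torus; and because $\gamma=\tfrac12$, the quantity controlling $\chi$ has a square-root singularity and is non-analytic at $z_c$, which makes the near-critical analysis more delicate than in the $\gamma=1$ torus models of \cite{LPS25-universal}. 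Third --- the true crux --- one must pin down the finite-volume suppression of the coefficients $a_m^{(r)}$ accurately enough to verify that it is Gaussian with the correct constant $\kappa_d$, equivalently that the torus generating functions collapse onto the complete-graph computation of the present paper; the framework of \cite{LPS25-universal} together with Theorem~\ref{thm:profile} identifies the target (the index $k=0$ in $I_k$ being dictated by $\gamma=\tfrac12$, in contrast to $k=1$ for the $\gamma=1$ models), but matching it requires tracking the sub-leading torus contributions of the lace-expansion diagrams. The supercritical regime $s\to+\infty$, where the profile grows super-Gaussianly and comparison with the infinite-volume model is weakest, is likely to be the most demanding part; the animal case should then follow from the tree case by the same modifications of the lace expansion used in infinite volume.
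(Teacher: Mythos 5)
This statement is explicitly a conjecture, not a theorem: the paper provides no proof of it, and in fact attributes the conjecture to \cite{LS25a}. The paper's contribution here is to supply supporting evidence by proving the exact complete-graph analogue in Theorem~\ref{thm:profile}. There is therefore no proof in the paper to compare your attempt against, and the appropriate judgement is whether your sketch is a plausible and honest account of what a proof would require.

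On that score you do well, but you have written a roadmap, not a proof, and you say so yourself. The structural observations are sound and match the authors' framing: the reduction of the two-point-function plateau to the susceptibility via the zero Fourier mode (the torus analogue of \eqref{eq:Gchi-ratio}), the role of $d>8$ in making $G_{z_c}(x)$ at scale $|x|\sim r$ subdominant to $V^{-3/4}$, the Riemann-sum mechanism by which a Gaussian damping of the coefficients would produce the profile, the identification of the index $k=0$ in $I_k$ with the mean-field $\gamma=\tfrac12$, and the sign determination $a_d<0$. But every load-bearing step --- a torus lace expansion for lattice trees and animals, diagrammatic bounds quantifying the finite-volume correction, uniformity throughout the window including $z>z_c$ where the infinite-volume series diverge, and above all the claimed suppression $a_m^{(r)}z_c^m \approx \const\cdot m^{-1/2}\exp(-\kappa_d m^2/(2V))$ --- is named as an obstacle rather than established. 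The last is where the real content lies: on the complete graph that suppression comes for free from the closed form $\prod_{j<n}(1-j/V)$ in \eqref{eq:chit-sum}, but on the torus there is no such identity, and extracting a Gaussian factor with a sharp constant from lace-expansion diagrams is precisely what is missing from the literature for trees and animals. Your proposal correctly locates the difficulty but, like the paper, leaves the conjecture open.
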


In \eqref{eq:LTLAplateau}, the torus point $x$ is identified with its
representative in $\Z^d \cap (-\frac r2,\frac r2]^d$ in the evaluation of $G_{z_c}(x)$.
For $d>8$, $G_{z_c}(x)$ decays as $|x|^{-(d-2)}$ \cite{Hara08,HHS03}, and the constant term
of order $V^{-3/4}= r^{-3d/4}$ dominates the Gaussian decay over most of the torus.
This is the ``plateau'' phenomenon.
On the complete graph,
the decaying term $\abs x^{-(d-2)}$ is absent, and
only the constant term occurs for $G_{V,01}$,
as in \eqref{eq:G01-profile}.
For $d=d_{\rm c}=8$, the conjecture is modified to include logarithmic corrections
to the window scale $V^{-1/2}$, the plateau scale $V^{-3/4}$,
and the susceptibility scale $V^{1/4}$, but with the identical
profile $I_0$.

\smallskip\noindent
{\bf Self-avoiding walk:}
Self-avoiding walk on the complete graph $\mathbb{K}_V$ is exactly solvable \cite{Slad20}.
For $1 \le n \le V-1$,
let $c_{V,n}(0,1) = \prod_{j=2}^n (V-j)$ denote the number of $n$-step self-avoiding
walks from $0$ to $1$ on $\mathbb{K}_V$.
Let $S_{V,01}(p) = \sum_{n=1}^{V-1} c_{V,n}(0,1)(p/V)^n$ and let $\chi_V^{\rm SAW}(p)
= 1+(V-1)S_{V,01}(p)$.  It is proved in \cite{Slad20}
(see also \cite[Appendix~B]{MPS23}) that,
as $V \to \infty$,
\begin{equation}\begin{aligned}
     S_{V,01}(1 + sV^{-1/2})
    & \sim  (2V)^{-1/2} I_1(-\sqrt{2} s)
    ,
\\
    \chi_V^{\rm SAW}(1 + sV^{-1/2})
    &\sim
    2^{-1/2}V^{1/2} I_1(-\sqrt{2}s)
    .
\end{aligned}\end{equation}
In \cite{MPS23,PS25}, the same profile $I_1$ is conjectured
to apply to the self-avoiding walk on $\T_r^d$ for $d \ge 4$,
in the sense that the two-point function and susceptibility obey
the analogue of \eqref{eq:LTLAplateau} with the right-hand sides replaced
respectively by
$b_d V^{-1/2} I_1(a_d s)$ and $ b_d V^{1/2} I_1(a_d s)$.
The conjectured log corrections for $d=4$ are indicated in
\cite[Section~1.6.3]{MPS23}.

\smallskip\noindent
{\bf Spin systems:}
The plateau for spin systems in dimensions $d \ge d_{\rm c}=4$
is discussed in \cite{LPS25-Ising,LPS25-universal,PS25}, including rigorous results for
a hierarchical $|\varphi|^4$ model and conjectures for spin systems on the torus.
The relevant profile for $n$-component spin systems is
\begin{equation}
    f_n(s) =
    \frac{\int_{\R^n} |x|^2 e^{-\frac 14 |x|^4 - \frac s2 |x|^2} dx}
    {n\int_{\R^n} e^{-\frac 14 |x|^4 - \frac s2 |x|^2} dx}
    =
    \frac{ I_{n+1}  (s)}{n I_{n-1}  (s)}.
\end{equation}
The profile $f_1$ has been proven to occur for the Ising model on the
complete graph (Curie--Weiss model); a recent reference is \cite{BBE24}.
As $n \to 0$, the profile $f_n(s)$ converges to $I_1(s)$, which is consistent
with the conventional wisdom that the spin model with $n=0$ corresponds to the self-avoiding walk.

\smallskip\noindent
{\bf Percolation:}
Percolation has been extensively studied both on infinite lattices
\cite{Grim99} and
on the complete graph (the Erd\H{o}s--R\'enyi random graph)
\cite{JLR00}.
This is a probabilistic model in which the
cluster containing $0$ is a connected subgraph $A \ni 0$
with weight
$p^{\abs A} (1-p)^{\abs{\del A}}$,
where $\abs A$ denotes the number of edges in $A$, and
$\del A$ denotes the
set of edges which are not in $A$ but are incident to one or two vertices in $A$.
On the complete graph,
we divide $p$ by $V$ (not by $eV$ as in \eqref{eq:G01def}) to make the critical value $p=1$.
Thus we define the \emph{two-point function}
\begin{equation}
\tau_{V,01}(p)
= \P_{p/V}(0 \leftrightarrow 1)
= \sum_{A \ni 0,1}   \Bigl(\frac{p}{V} \Big)^{\abs A}
	\Bigl(1-\frac{p}{V} \Big)^{\abs {\del A}}
\end{equation}
and the \emph{susceptibility} (expected cluster size) $\chi_V \supperc(p) = 1 +(V-1)\tau_{V,01}(p)$.
Our conjecture for an analogue of Theorem~\ref{thm:profile}
for percolation on the complete graph is
as follows.
It involves the Brownian excursion
$W^*$ of length $1$, and the
moment generating function
$\Psi(x) = \mathbb{E} \exp[ x\int_0^1 W^*(t)\mathrm dt] $ for the Brownian excursion area.

\begin{conjecture}
\label{conj:perc}
For $s\in \R$, let
\begin{equation}
\label{eq:fperc}
f_{\rm perc}(s) = \int_0^\infty x^2 \mathrm d\sigma_s, \qquad
\mathrm d \sigma_s =
	\frac 1 { \sqrt{2\pi} } x^{-5/2} \Psi(x^{3/2})
    e^{-\frac 1 6 x^3 + \frac s 2 x^2 - \frac{ s^2} 2 x}
    \mathrm dx
    .
\end{equation}
Then, for some $a,b>0$, as $V\to\infty$ we have
\begin{equation} \label{eq:perc_conj}
\begin{aligned}
    \tau_{V,01}(1+sV^{-1/3}) & \sim b V^{-2/3} f_{\rm perc}(as),
    \\
    \chi_{V} \supperc(1+sV^{-1/3}) & \sim b V^{1/3} f_{\rm perc}(as).
\end{aligned}
\end{equation}
\end{conjecture}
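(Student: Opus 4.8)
\medskip
\noindent
Here is a strategy by which one might hope to prove Conjecture~\ref{conj:perc}. The plan is to mirror the direct-series analysis used above for trees and connected subgraphs, now carrying the extra percolation weight $(1-p/V)^{\abs{\del A}}$. It suffices to handle the susceptibility: since $\chi_V^{\rm perc}(p)=1+(V-1)\tau_{V,01}(p)$ and (as the analysis will show) $\chi_V^{\rm perc}(1+sV^{-1/3})\to\infty$, the two-point asymptotics in \eqref{eq:perc_conj} would follow from the susceptibility asymptotics via $\tau_{V,01}=(\chi_V^{\rm perc}-1)/(V-1)\sim\chi_V^{\rm perc}/V$, exactly as in \eqref{eq:Gchi-ratio}. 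On $\K_V$ the boundary of a connected subgraph $A$ with $n$ vertices satisfies $\abs{\del A}=n(V-n)+\binom{n}{2}-\abs A$, which depends on $A$ only through $n$ and $\abs A$. Decomposing the cluster of $0$ according to its number of vertices $n$ and surplus $\ell$, and writing $q=q(p,V)=\frac{p/V}{1-p/V}$, one gets the exact identity
\begin{equation}
\label{eq:chiperc-series}
    \chi_V^{\rm perc}(p)
    =\sum_{n=1}^{V} n\binom{V-1}{n-1}\Bigl(1-\tfrac pV\Bigr)^{n(V-n)+\binom{n}{2}}
      \sum_{\ell\ge 0} C(n,n-1+\ell)\,q^{\,n-1+\ell},
\end{equation}
the percolation analogue of \eqref{eq:chit}--\eqref{eq:chiDelta}, with $q$ in place of $p/(eV)$ and the new factor $(1-p/V)^{n(V-n)+\binom{n}{2}}$ (whose $n(V-n)$ part plays the role of the $e^{-n}$ in the normalisation of \eqref{eq:G0def}, while the $\binom{n}{2}$ part is the genuinely new percolative suppression of internal non-edges).

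Next, fix $s\in\R$, set $p=1+sV^{-1/3}$, and write $n=xV^{2/3}$. For the tree term ($\ell=0$) in \eqref{eq:chiperc-series} one expands $\binom{V-1}{n-1}$ and $n^{n-1}/(n-1)!$ by Stirling, expands $\log\bigl[(1-p/V)^{n(V-n)+\binom{n}{2}-(n-1)}q^{n-1}\bigr]$, and collects powers of $V$; the divergent $V^{2/3}$ and $V^{1/3}$ contributions to the exponent cancel --- this cancellation being precisely why $p=1$ is critical and the window has width $V^{-1/3}$ --- leaving
\begin{equation}
\label{eq:perc-tree}
    n\binom{V-1}{n-1}\Bigl(1-\tfrac pV\Bigr)^{n(V-n)+\binom{n}{2}}n^{n-2}q^{n-1}
    \sim \frac{1}{\sqrt{2\pi n}}\,e^{-\frac16 x^3+\frac s2 x^2-\frac{s^2}{2}x},
\end{equation}
uniformly for $n/V^{2/3}$ in compact subsets of $(0,\infty)$. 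Since consecutive integers $n$ give a mesh $\Delta x=V^{-2/3}$, with small $n$ contributing only $O(1)=o(V^{1/3})$ and $n\gg V^{2/3}$ negligible, the sum over $n$ of \eqref{eq:perc-tree} is a Riemann sum, so the $\ell=0$ part of $\chi_V^{\rm perc}$ is asymptotic to $\frac{V^{1/3}}{\sqrt{2\pi}}\int_0^\infty x^{-1/2}e^{-\frac16 x^3+\frac s2 x^2-\frac{s^2}{2}x}\,\mathrm dx$, which is $V^{1/3}$ times the value of $f_{\rm perc}(s)$ obtained by setting $\Psi\equiv 1$ in \eqref{eq:fperc}. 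This is the counterpart of the Riemann-sum derivation of the profile $I(s)$ for trees.

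The factor $\Psi(x^{3/2})$ in \eqref{eq:fperc} is produced by the surplus. By the fixed-$\ell$ case of the asymptotics of \cite[Corollary~1]{BCM90} (Wright's asymptotics), $C(n,n-1+\ell)/n^{n-2}\sim\rho_\ell n^{3\ell/2}$ as $n\to\infty$ with $\rho_0=1$, and it is classical that the Wright constants are the moments of the Brownian excursion area, $\sum_{\ell\ge0}\rho_\ell u^\ell=\Psi(u)$ (for instance the count $C(n,n)\sim\frac{\sqrt{2\pi}}{4}n^{n-1/2}$ of connected unicyclic graphs gives $\rho_1=\sqrt{\pi/8}=\E[\int_0^1 W^*]$). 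Since $Vq\to1$, when $n=xV^{2/3}$ we have $n^{3/2}q\to x^{3/2}$, so the inner sum in \eqref{eq:chiperc-series} satisfies $n^{-(n-2)}\sum_{\ell\ge0}C(n,n-1+\ell)q^\ell\to\Psi(x^{3/2})$. Carrying this factor through the Riemann sum would upgrade the previous conclusion to
\begin{equation}
\label{eq:perc-conc}
    \chi_V^{\rm perc}(1+sV^{-1/3})
    \sim\frac{V^{1/3}}{\sqrt{2\pi}}\int_0^\infty x^{-1/2}\Psi(x^{3/2})\,e^{-\frac16 x^3+\frac s2 x^2-\frac{s^2}{2}x}\,\mathrm dx
    = V^{1/3} f_{\rm perc}(s),
\end{equation}
so that \eqref{eq:perc_conj} would hold with $a=b=1$ (consistent with the stated $a,b>0$; indeed, a genuine mismatch in these constants would be incompatible with the precise form of $\mathrm d\sigma_s$ in \eqref{eq:fperc}), and then $\tau_{V,01}(1+sV^{-1/3})\sim V^{-2/3}f_{\rm perc}(s)$.

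The hard part will be the uniformity needed to justify the two interchanges of limit with summation. For the sum over $n$, one needs dominated control of the general term in \eqref{eq:chiperc-series} over the full range $1\le n\le V$: the band $n\asymp V^{2/3}$ gives the Riemann sum, but the supercritical tail $n\gg V^{2/3}$ must be shown to contribute $o(V^{1/3})$, which calls for large-deviation upper bounds on $\P(\abs{C(0)}\ge m)$ in the critical window (note that for fixed $s$ the largest cluster is still of order $V^{2/3}$, so there is genuine decay to exploit, of the type $\P(\abs{C(0)}\ge m)\lesssim e^{-cm^3/V^2}$ for moderate $m$, with cruder bounds for macroscopic $m$). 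For the sum over $\ell$ one must truncate the surplus: Proposition~\ref{prop:Cnk-intro} controls $C(n,n-1+\ell)$ for $\ell$ up to order $n$, and when this is combined with $q\asymp V^{-1}$ and the weight $(1-p/V)^{n(V-n)}\asymp e^{-n}$ --- whose $(e/2)^n$-type mismatch against $\binom{\binom{n}{2}}{n-1+\ell}$ is exactly what the factor $(2/e)^n$ in \eqref{eq:Cnk-intro} absorbs --- one finds that surplus $\ell$ of order $n$ or larger is negligible, the same mechanism driving \eqref{eq:Delta0bd}--\eqref{eq:Deltabd}, now with the extra percolation weight and with $\ell$ ranging up to $\binom{n}{2}-(n-1)$. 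Turning the ``precise but cumbrous'' asymptotics of \cite[Corollary~1]{BCM90} into a bound valid \emph{uniformly} over the joint range $n\asymp V^{2/3}$, $0\le\ell=o(n)$ --- so that $\Psi(x^{3/2})$ may be taken inside the sum --- will be the most delicate ingredient. A conceptual cross-check is that $\chi_V^{\rm perc}/V=V^{-1}\E\sum_{\text{components }C}\abs C^2$, and Aldous's scaling limit of the Erd\H{o}s--R\'enyi random graph in the critical window predicts that this, rescaled by $V^{-1/3}$, converges to $\E\sum_i\gamma_i^2$, with $\gamma_i$ the ordered excursion lengths of reflected Brownian motion with parabolic drift of parameter $s$.
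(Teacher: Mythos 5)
The statement is a conjecture; the paper offers no proof of it, and explicitly notes that the occurrence of the profile ``has not been proved.'' What the paper gives is a heuristic origin: Aldous's weak convergence of rescaled cluster sizes and the Janson--Spencer point-process intensity, from which $\mathrm d\sigma_s$ and hence $f_{\rm perc}$ are read off. Your proposal is therefore not an alternative to an existing proof but a plan for a direct derivation, modelled on the series analysis the paper uses for trees and connected subgraphs. The plan is sound in outline: the exact decomposition of $\chi_V^{\rm perc}$ by cluster size $n$ and surplus $\ell$, with $q=(p/V)/(1-p/V)$, is correct; the Stirling/Taylor expansion at $n=xV^{2/3}$, $p=1+sV^{-1/3}$ really does cancel the $V^{2/3}$ and $V^{1/3}$ contributions to the exponent, leaving the local term $(2\pi n)^{-1/2}e^{-\frac 16 x^3+\frac s2 x^2 - \frac{s^2}2 x}$; the $\Psi(x^{3/2})$ factor is correctly traced to Wright's constants and their identification (Spencer) with moments of the Brownian excursion area; and $n^{3/2}q\to x^{3/2}$ in the window. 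If carried through rigorously this would indeed give $a=b=1$.

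The unresolved part, which you rightly flag, is exactly why this remains a conjecture. Two features make it qualitatively harder than the tree/animal proof. First, the weight $(1-p/V)^{n(V-n)+\binom{n}{2}}$ couples $n$ to the window parameter $s$, and the critical width is $V^{-1/3}$ rather than $V^{-1/2}$, so the large-$n$ tail of the $n$-sum must be controlled at a different scale than Lemma~\ref{lem:b} provides. Second, and more fundamentally, the paper's surplus estimates (Proposition~\ref{prop:Cnk-intro} and Lemmas~\ref{lemma:sparse}--\ref{lemma:dense}) are one-sided bounds designed to show that $\Delta_V$ \emph{vanishes}; here the surplus sum must converge to the nontrivial profile $\Psi(x^{3/2})$, which requires two-sided asymptotics for $C(n,n-1+\ell)$ uniformly over $\ell=o(n)$ in the window $n\asymp V^{2/3}$, something the paper neither needs nor proves and which does not follow directly from the bound of Proposition~\ref{prop:Cnk-intro}. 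A small error in your cross-check: since $\E\sum_C|C|^2=\sum_{u,v}\P(u\leftrightarrow v)=V\,\E|C(0)|$, the correct identity is $\chi_V^{\rm perc}=V^{-1}\E\sum_C|C|^2$, not $\chi_V^{\rm perc}/V=V^{-1}\E\sum_C|C|^2$. The Aldous consistency check still holds after the correction: $\chi_V^{\rm perc}\sim bV^{1/3}$ then corresponds to $\E\sum_C|C|^2\sim bV^{4/3}$, matching $V^{-4/3}\sum_C|C|^2\Rightarrow\sum_i\gamma_i^2$.
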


Note the different powers of $V$ in \eqref{eq:perc_conj} compared to
\eqref{eq:LTLAplateau} and \eqref{eq:G01-profile}--\eqref{eq:LTK-profile}.
The powers of $V$ in \eqref{eq:perc_conj} are well-known, but to our knowledge
the occurrence of the profile has not been proved.
On the torus $\T_r^d$ with $d>6$, the powers
$V^{-1/3}, V^{-2/3}, V^{1/3}$
are proved in \cite{HMS23},
and the role of $f_{\rm perc}$ was first conjectured in \cite[Appendix~C]{LPS25-Ising}.

The origin of the conjecture is as follows.
The properly rescaled cluster size (without expectation) is known to converge in distribution to a random variable described by the Brownian excursion \cite{Aldo97},
and the limiting random variable is characterised by a point process \cite{JS07}.
The measure $\sigma_s$ is the intensity of the point process and is found in \cite[Theorem~4.1]{JS07}.
The point process describes cluster sizes, in the sense that
\begin{equation}
n^{-2k/3} \sum_i \abs{ C_i }^k
\Rightarrow
\int_0^\infty x^k \mathrm d\sigma_s
\qquad
(k \ge 2)
\end{equation}
in distribution \cite{Aldo97}.
The $k=2$ case corresponds to $\chi_V\supperc$ and identifies $f_{\rm perc}(s)$.

\section{Proof for trees}

We begin with an elementary lemma.

\begin{lemma} \label{lem:b}
Let $\gamma \ge 0$, $\kappa > 0$, and $\lambda \in \R$.
There is a constant $C_{\kappa, \lambda}>0$ such that
\begin{align} \label{eq:b_bound}
\sum_{n=\lceil b \sqrt V \rceil}^{V}
	\frac{ 1 }{n^\gamma}
	e^{- \kappa n^2/V} e^{\lambda n/\sqrt{V}}
\le C_{\kappa,\lambda} b^{-\gamma} V^{ (1 - \gamma) /2} .
\end{align}
for all $V$ and for all $b$ sufficiently large (depending on $\kappa,\lambda$).
\end{lemma}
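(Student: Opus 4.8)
The plan is to reduce this to a one-dimensional Gaussian integral via the substitution $x = n/\sqrt V$. First I would observe that on the summation range $n \ge b\sqrt V$ one has the crude bound $n^{-\gamma} \le (b\sqrt V)^{-\gamma} = b^{-\gamma} V^{-\gamma/2}$, which can be pulled out of the sum. It then suffices to show that $\sum_{n = \lceil b\sqrt V\rceil}^{V} e^{-\kappa n^2/V} e^{\lambda n/\sqrt V} \le C'_{\kappa,\lambda} V^{1/2}$ for $b$ large, since multiplying the two estimates produces exactly $C_{\kappa,\lambda} b^{-\gamma} V^{(1-\gamma)/2}$.

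For this remaining Gaussian-type sum, the key point is that the exponent $t \mapsto -\kappa t^2/V + \lambda t/\sqrt V$ is concave with maximum at $t = \lambda\sqrt V/(2\kappa)$, so once $b$ is chosen large enough that $b\sqrt V$ exceeds this vertex by at least $1$ — which holds as soon as $b \ge 1 + |\lambda|/(2\kappa)$, using $V \ge 1$ — the summand $g(t) := e^{-\kappa t^2/V + \lambda t/\sqrt V}$ is decreasing over the whole index range. For a nonnegative decreasing function the sum is dominated by the integral, $\sum_{n \ge \lceil b\sqrt V\rceil} g(n) \le \int_{\lceil b\sqrt V\rceil - 1}^{\infty} g(t)\,dt \le \int_{-\infty}^{\infty} g(t)\,dt$, and the substitution $t = y\sqrt V$ together with completing the square evaluates this to $\sqrt V \int_{\mathbb R} e^{-\kappa y^2 + \lambda y}\,dy = \sqrt{\pi/\kappa}\, e^{\lambda^2/(4\kappa)}\, V^{1/2}$. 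Hence one may take $C_{\kappa,\lambda} = \sqrt{\pi/\kappa}\, e^{\lambda^2/(4\kappa)}$, and the hypothesis ``$b$ sufficiently large'' is precisely what is used to guarantee monotonicity.

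There is no real obstacle here; the only point needing a little care is the integral–sum comparison, where one must ensure the summand is genuinely monotone on the entire index range (the reason $b$ is taken large relative to $\lambda/\kappa$) and be mindful of the off-by-one between the integer starting index and the lower endpoint of the comparison integral. The case $\gamma = 0$ is covered verbatim, and if $b\sqrt V > V$ the sum is empty and the inequality is trivial. (One could in fact prove a version valid for all $b \ge 0$ by comparing a unimodal Riemann sum to its integral, but the monotone argument above is cleaner and all that is needed.)
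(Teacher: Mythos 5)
Your proposal is correct and follows the same route as the paper's proof: extract $n^{-\gamma}\le b^{-\gamma}V^{-\gamma/2}$, note the remaining summand is monotone decreasing once $b$ is large relative to $|\lambda|/\kappa$, and compare the sum with a Gaussian integral that scales as $\sqrt V$. The only difference is that you spell out the explicit threshold for $b$ and the explicit constant, which the paper leaves implicit.
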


\begin{proof}
Since $n \ge b\sqrt V$ and $\gamma \ge 0$,
the left-hand side of \eqref{eq:b_bound} is bounded by
\begin{align}
\frac{1}{b^\gamma V^{\gamma / 2}}
	\sum_{n=\lceil b \sqrt V \rceil}^{\infty}
	e^{- \kappa n^2/V} e^{\lambda n/\sqrt{V}} .
\end{align}
For $b$ sufficiently large (depending on $\kappa,\lambda$),
the summand above is monotone decreasing in $n$,
so we can bound the sum by the integral
\begin{align}
\int_{b \sqrt V - 1}^\infty
e^{- \kappa ( y / \sqrt V )^2} e^{\lambda( y/\sqrt{V}) }
\mathrm dy
\le C_{\kappa, \lambda} \sqrt V ,
\end{align}
and the desired result follows.
\end{proof}

\begin{proof}[Proof of Theorem~\ref{thm:profile} for trees]
We use \eqref{eq:chit} and drop the superscript $t$.
Fix $s\in \R$.
For $p = 1 + s V^{-1/2}$, by
combining $V^{-(n-1)}$ with the binomial coefficient,
we have
\begin{equation}
\label{eq:chit-sum}
    \chi_V(1+sV^{-1/2})
    = \sum_{n=1}^{V}
    \Bigl(\prod_{j=1}^{n-1}( 1 - \frac jV )\Bigr) \frac 1 {(n-1)!}
   \frac{n^{n-1}}{e^{n-1}} \Big(1+\frac s {\sqrt{V}} \Big)^{n-1}.
\end{equation}
Let $0 < a < 1 < b < \infty$.
We divide the sum over $n$  into three parts
$\chi_V \supk 1, \chi_V \supk 2, \chi_V \supk 3$, which respectively
sum over $n$ in the intervals $[1, a \sqrt V)$, $[ a \sqrt V, b \sqrt V]$, $(b \sqrt V , V]$.
We will prove that
\begin{equation}
\label{eq:claimLTK13}
\chi_V \supk 1 \lesssim e^{a\abs s} ( 1 + a^{1/2} V^{1/4}),
\qquad
\chi_V \supk 3 \le C_{\abs s} b^{-1/2} V^{1/4}
\end{equation}
for all $a>0$ and all $b$ sufficiently large,
and that
\begin{equation}
\label{eq:claimLTK2}
\lim_{V\to \infty} V^{-1/4} \chi_V^{(2)} = \int_a^b f(x) \mathrm d x ,
\qquad
f(x) = \frac{e}{\sqrt{2\pi}}e^{-x^2/2}\frac{1}{\sqrt{x}} e^{sx}
\end{equation}
for all $a,b$.
These claims imply that
\begin{equation}
\int_a^b f(x) \mathrm d x
\le \liminf_{V\to \infty} \frac{ \chi_V }{V^{1/4}}
\le \limsup_{V\to \infty} \frac{ \chi_V }{V^{1/4}}
\le  C e^{a\abs s}a^{1/2}+ \int_a^b f(x) \mathrm d x + C_{|s|}  b^{-1/2}
\end{equation}
for all $a > 0$ and all $b$ sufficiently large.
Since $\chi_V$ does not depend on $a$ or $b$,
by taking the limits $a\to 0$, $b\to \infty$, we obtain
$\lim_{V\to \infty} V^{-1/4} \chi_V = \int_0^\infty f$,
which is the desired result \eqref{eq:LTK-profile}.

It remains to prove the claims \eqref{eq:claimLTK13}--\eqref{eq:claimLTK2}.
Let
\begin{equation}
    b_{n} = \frac{n^{n-1}}{(n-1)!e^{n-1}} ,
\end{equation}
which obeys
$b_{n}  \lesssim 1/\sqrt{n}$,
by Stirling's formula.
Using this in the sum for $\chi_V \supk 1$,
and using $1 + s/\sqrt V \le e^{ \abs s / \sqrt V}$, we get
\begin{equation} \label{eq:pf_tree_1}
\chi_V^{(1)}
\lesssim \sum_{n=1}^{\lfloor a \sqrt V \rfloor}  1 \frac{1}{\sqrt {n} } e^{ |s| n / \sqrt V}
\le e^{a\abs s} \sum_{n=1}^{\lfloor a \sqrt V \rfloor}  \frac{1}{\sqrt {n} }
\lesssim e^{a\abs s}  ( 1 +  a^{1/2} V^{1/4}  ) ,
\end{equation}
as claimed.
For $\chi_V \supk 3$,
we also need a bound on the product over $j$.
Using $1-x \le e^{-x}$,
we have
\begin{equation}
\prod_{j=1}^{n-1}\big( 1 - \frac jV \big)
\le \exp\Bigl\{ - \frac 1 V \sum_{j=1}^{n-1}  j  \Bigr\}
 = \exp \Bigl\{ - \frac { n (n-1)} {2V} \Bigr\}    .
\end{equation}
By Lemma~\ref{lem:b} with $\gamma = \kappa = \half$ and $\lambda=|s|$,
this implies that, for all $b$ sufficiently large,
\begin{equation} \label{eq:pf_tree_3}
    \chi_V^{(3)}
\lesssim
    \sum_{n= \lceil b \sqrt V \rceil}^{V}
    e^{-n^2/2V}
     e^{n/2V}
    \frac{1}{\sqrt{ n}}
    e^{|s|n/\sqrt{V}}
\lesssim e^{1/2} b^{-1/2} V^{1/4}.
\end{equation}

Finally, for $\chi_V \supk 2$ we fix $a,b$ and use the asymptotic formulas
\begin{align}
\Big( 1 + \frac s {\sqrt V} \Big)^{n-1}
&= \exp \Big\{ (n-1) \log( 1 + \frac s {\sqrt V} ) \Big\}
= e^{s n/ \sqrt V} \Big[ 1   + O\Big(\frac 1 { \sqrt V }\Big)
	+ O\Big(\frac n V\Big) \Big] ,
\\
\prod_{j=1}^{n-1}\big( 1 - \frac jV \big)
&= \exp\Big\{\sum_{j=1}^{n-1} \log(1-\frac j V)\Big\}
= e^{-n^2/2V} \Big[ 1 +O\Big(\frac n V\Big) + O\Big(\frac {n^3} {V^2}\Big)\Big] ,
\end{align}
which follow from Taylor expansion of the logarithm 
(the constants here depend on $s$).
Since $n \in [a\sqrt V, b\sqrt V]$,
the above, together with the fact that $b_{n} =
\frac{ e }{\sqrt{2\pi n}}[1+O(1/n)]$
by Stirling's formula, give
\begin{align}
    \chi_V^{(2)} &  =
    \sum_{n= \lceil a \sqrt V \rceil}^{ \lfloor b \sqrt V \rfloor}
    e^{-n^2 / 2V}
    \frac{e}{\sqrt{2\pi n}}
    e^{sn/\sqrt{V}}
    \Big[ 1 + O\Big(\frac 1 {\sqrt V}\Big) \Big]
    .
\end{align}
The desired limit then follows from the observations that
the leading term of $V^{-1/4} \chi_V \supk 2$
is a Riemann sum for the integral $\int_a^b f$ with mesh size $V^{-1/2}$.
\end{proof}

\begin{proof}[Proof of Theorem~\ref{thm:G0} for trees]
We use \eqref{eq:G0} and again drop the superscript $t$.
Fix $s\ge 0$. Let $p_V$ be a sequence with $p_V \le 1 + s V^{-1/2}$ and $p_V \to p \in [0,1]$.
Similarly to \eqref{eq:chit-sum} and with an additional
factor of $n$ in the denominator,
\begin{equation}
\label{eq:G0t-sum}
    G_{V,0}(p_V)
    = \sum_{n=1}^{V}
    \Big(\prod_{j=1}^{n-1}( 1 - \frac jV )\Big) \frac 1 {n!}
   \frac{n^{n-1}}{e^{n-1}} p_V^{n-1}.
\end{equation}
Let $N, b\ge 1$.
We divide the sum over $n$  into three parts
$G_V \supk 1, G_V \supk 2, G_V \supk 3$, which respectively
sum over $n$ in the intervals $[1, N]$, $( N, b \sqrt V]$, $(b \sqrt V , V]$.
For a fixed $N$, we immediately get
\begin{align}
\label{eq:G0N}
    \lim_{V\to \infty} G^{(1)}_{V}(p_V) =
    \sum_{n=1}^N \frac{1}{n!} \frac{n^{n-1}}{e^{n-1}}p^{n-1},
\end{align}
which dominates the sum.
Indeed,
using monotonicity of the generating function,
for $G_V \supk 2$ we can proceed as in \eqref{eq:pf_tree_1} to bound
\begin{align}
G_V \supk 2 (p_V)
\le G_V \supk 2 (1 + s V^{-1/2})
\le e^{bs} \sum_{n=N}^{ \lfloor b\sqrt V \rfloor} \frac 1 {n^{3/2}}
\lesssim \frac{ e^{bs} } { \sqrt N} .
\end{align}
For $G_V \supk 3$, we can argue as in \eqref{eq:pf_tree_3}
but with an additional factor $n$ in the denominator,
and use Lemma~\ref{lem:b} with $\gamma = \frac 3 2$
to get $G_V \supk 3 (p_V) \lesssim b^{-3/2}V^{-1/4}$
for $b$ sufficiently large.
Together, we obtain
\begin{align}
\sum_{n=1}^N \frac{1}{n!} \frac{n^{n-1}}{e^{n-1}}p^{n-1}
\le \liminf_{V\to \infty} G_{V,0}
\le \limsup_{V\to \infty} G_{V,0}
\le \sum_{n=1}^N \frac{1}{n!} \frac{n^{n-1}}{e^{n-1}}p^{n-1}
	+  \frac{ C e^{bs} } { \sqrt N}
\end{align}
for all $N \ge 1$ and all $b$ sufficiently large.
Since $G_{V,0}$ does not depend on $N$,
we can take the limit $N \to \infty$ to conclude the desired result \eqref{eq:G0lim}.
\end{proof}

\section{Proof for connected subgraphs}

\subsection{Bound on $C(n,n+k)$}

We use the asymptotic formula for $C(n,n+k)$ proved in \cite{BCM90}. We follow the notation in \cite{BCM90} and write
\begin{equation}
x = 1 + \frac k n .
\end{equation}
For $x > 1$, we define the function $y = y(x) \in (0,1)$ implicitly by
\begin{equation}
\label{eq:Taylor_xy}
x  = \frac{1}{2y} \log \biggl( \frac { 1+ y } { 1 - y } \biggr)
= \frac{1}{y}\arctanh y
= \sum_{m = 0}^\infty \frac{ y^{2m} } { 2m+1 }
 ,
\end{equation}
and we define the functions $\varphi(x)$, $a(x)$ by
\begin{gather} \label{eq:phi}
e^{\varphi(x)} = \frac{ 2 e^{-x} y^{1-x} }{ \sqrt{ 1 - y^2} } , \\
a(x) = x(x+1)(1-y) + \log( 1 - x + xy) - \half \log( 1 - x + x y^2 ) .
\end{gather}
Both $\varphi$ and $a$ extend continuously to $x=1$ by defining $y^{1-x} = 1$ at $x=1$ and defining $a(1) = 2 + \half \log\frac 3 2$.

Let $N = \binom n 2$.
It is proved in \cite[Corollary~1]{BCM90} that there are constants $w_k = 1 + O(1/k)$
for which
\begin{equation} \label{eq:BCM}
C(n,n+k) = w_k \binom N {n+k} e^{n \varphi(x)} e^{a(x)}
	\bigg[ 1 + O\bigg( \frac { (k+1)^{1/16} }{ n^{9/50} } \bigg) \bigg]
\end{equation}
uniformly in $0 \le k \le N - n$.
The constants $w_k$ are related to Wright's constants for the
the asymptotics of $C(n,n+k)$ with $k$ fixed \cite{Wrig77},
and they are related to the Brownian excursion area \cite{Spen97}.
We will simply bound $w_k$ by a constant.
The next lemma gives estimates for $\varphi(x)$ and $a(x)$.

\begin{lemma} \label{lem:aphi}
Let $x\ge 1$.
\begin{enumerate}
\item [(i)]
The function $a(x)$ is bounded.

\item [(ii)]
Let $t = \sqrt{3e}$ and $y = y(x)$. Then
\begin{equation} \label{eq:phi_bdd}
e^{\varphi(x) } \le \frac 2 e  \exp\Bigl\{  - \frac 1 3 y^2 \log \frac y t \Bigr\} ,
\end{equation}
and the right-hand side is monotonically increasing for $0 < y \le t/\sqrt e$.
\end{enumerate}
\end{lemma}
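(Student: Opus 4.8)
The plan is to analyse the explicit formulas \eqref{eq:phi}--\eqref{eq:Taylor_xy} for $\varphi$ and $a$ through the substitution $x = x(y) = \frac{1}{y}\arctanh y$, which maps $y \in (0,1)$ bijectively onto $x \in (1,\infty)$ with $x \to 1$ as $y \to 0$ and $x \to \infty$ as $y \to 1$. So everything reduces to estimates in the variable $y$.

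For part (i), I would substitute $x = \frac 1y \arctanh y$ into $a(x)$ and check boundedness on the two ends $y \to 0^+$ and $y \to 1^-$, using continuity of $a$ on the compact middle. Near $y=0$: from the Taylor series in \eqref{eq:Taylor_xy}, $x = 1 + \frac 13 y^2 + O(y^4)$, so $x-1 = O(y^2)$ and $1 - x + xy = 1 - (x-1)(1-y) \to 1$, $1-x+xy^2 \to 1$, while $x(x+1)(1-y) \to 2$; hence $a \to 2 + \half\log\frac 32 = a(1)$, matching the stated continuous extension. Near $y = 1^-$: here $x \to \infty$ like $\frac{1}{2}\log\frac{2}{1-y}$, so I need to show the potentially divergent pieces cancel. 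Write $1 - x + xy = 1 - x(1-y)$ and $1 - x + xy^2 = 1 - x(1-y^2) = 1 - x(1-y)(1+y)$; since $x(1-y) \sim \frac{1-y}{2}\log\frac{2}{1-y} \to 0$, both logarithms tend to $0$, and $x(x+1)(1-y) = x(1-y)(x+1) \sim \frac{1-y}{2}\log\frac{2}{1-y}\cdot x \to 0$ as well (the $(1-y)\log^2(1-y)$ factor kills it). So $a(x) \to 0$ as $x\to\infty$, and being continuous on $[1,\infty)$ with finite limits at both ends, $a$ is bounded.

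For part (ii), taking logarithms in \eqref{eq:phi} gives
\begin{equation}
\varphi(x) = \log\frac 2e \;-\; (x-1) \;-\; (x-1)\log y \;-\; \half\log(1-y^2).
\end{equation}
The goal is to show $\varphi(x) \le \log\frac 2e - \frac 13 y^2 \log\frac yt$ with $t = \sqrt{3e}$, equivalently
\begin{equation}
-(x-1) - (x-1)\log y - \half\log(1-y^2) \;\le\; -\tfrac 13 y^2\log\tfrac yt
= -\tfrac 13 y^2\log y + \tfrac 13 y^2\log\sqrt{3e}.
\end{equation}
I would prove this as a one-variable inequality in $y \in (0,1)$. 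The key quantitative input is the expansion $x - 1 = \sum_{m\ge 1}\frac{y^{2m}}{2m+1} = \frac 13 y^2 + \frac 15 y^4 + \cdots$, so $(x-1)\log y \ge \frac 13 y^2\log y$ when $\log y < 0$ (since all higher coefficients are positive and $y^{2m}\log y < 0$), giving $-(x-1)\log y \le -\frac 13 y^2\log y$, which already produces the leading $-\frac 13 y^2\log y$ term on the right. It then remains to check that the leftover pieces $-(x-1) - \half\log(1-y^2)$ are dominated by $\frac{1}{6}y^2\log(3e) = \frac 16 y^2(1 + \log 3)$; here I would use $x - 1 \le \frac 13 y^2 \cdot \frac{1}{1-y^2}$ (summing the geometric-type series coefficient-wise, since $\frac{1}{2m+1}\le \frac 13$ for $m\ge 1$... actually $\frac{1}{2m+1} \le \frac{1}{3}$ only for $m \ge 1$, so $x-1 \le \frac 13\sum_{m\ge 1}y^{2m} = \frac{y^2}{3(1-y^2)}$) and $-\half\log(1-y^2) = \half\sum_{m\ge 1}\frac{y^{2m}}{m} \le \half y^2\cdot\frac{1}{1-y^2}$, reducing everything to an elementary bound that holds on $(0,1)$ — or, if the constants are tight near $y=1$, restricting attention to the range $0 < y \le t/\sqrt e = \sqrt 3$ is vacuous since $y<1$, so the bound is genuinely needed on all of $(0,1)$ and I would verify it by a short calculus argument (difference of the two sides vanishes to order $y^2$ at $0$ with the right sign, and monotonicity/convexity checks toward $y=1$). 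Finally, monotonic increase of the right-hand side $\frac 2e\exp\{-\frac 13 y^2\log\frac yt\}$ for $0 < y \le t/\sqrt e$ is the statement that $g(y) := -y^2\log\frac yt = -y^2\log y + y^2\log t$ is increasing there; $g'(y) = -2y\log y - y + 2y\log t = y(2\log t - 1 - 2\log y) = 2y\log\frac{t}{y\sqrt e}$, which is $\ge 0$ exactly when $y \le t/\sqrt e$, so this is immediate.

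The main obstacle I anticipate is the inequality in part (ii): matching the constant $t = \sqrt{3e}$ exactly (rather than some larger constant) forces the crude bounds above to be used carefully, and one must confirm the leftover terms $-(x-1) - \half\log(1-y^2)$ really are controlled by $\frac 16 y^2\log(3e)$ uniformly on $(0,1)$ and not merely for small $y$ — near $y = 1$ the left side has a logarithmic divergence from $-\half\log(1-y^2)$ while the right side only grows like $y^2\log y^{-1}$... wait, both sides blow up, so the comparison is delicate precisely there and will require the coefficient-wise series comparison $\half\sum\frac{y^{2m}}{m}$ versus $\frac 13\sum y^{2m}\,(-\log y)$-type terms to be done honestly. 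Part (i)'s boundedness at $y\to 1$ is a milder version of the same cancellation and should follow once the asymptotics $x(1-y)\to 0$ are pinned down.
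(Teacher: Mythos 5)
Your part (i) is correct and takes a genuinely different route from the paper: you compute the $y\to 1^-$ asymptotics of each term in $a$ directly (the key observations being $x(1-y)\sim\tfrac{1-y}{2}\log\tfrac{2}{1-y}\to 0$ and $x(x+1)(1-y)\sim\tfrac{1-y}{4}\log^2\tfrac{2}{1-y}\to 0$), whereas the paper simply cites the bound $|a(x)|\lesssim x^2(1-y)\sim 2x^2e^{-2x}$ from Lemma~3.2 of \cite{BCM90}. Your version is self-contained and fine. Your check that the right-hand side of \eqref{eq:phi_bdd} is increasing on $0<y\le t/\sqrt e$ is also correct.

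For part (ii), however, there is a genuine sign error in the central step, and it is precisely the one that invalidates the ``peel off the leading term, control the leftover'' strategy as you set it up. You write that since $x-1=\tfrac13 y^2+\tfrac15 y^4+\cdots\ge\tfrac13 y^2$ and $\log y<0$, one has $(x-1)\log y\ge\tfrac13 y^2\log y$. This is backwards: multiplying $x-1\ge\tfrac13 y^2$ by the negative quantity $\log y$ flips the inequality, giving $(x-1)\log y\le\tfrac13 y^2\log y$ and hence $-(x-1)\log y\ge-\tfrac13 y^2\log y$. In other words, the $\log y$ term on the left is \emph{larger} than the one you want on the right, by the positive amount $(-\log y)\sum_{m\ge 2}\tfrac{y^{2m}}{2m+1}$, which cannot be discarded. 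Your subsequent ``leftover'' analysis omits this excess entirely, and (as you yourself flag at the end) the crude bounds $x-1\le\tfrac{y^2}{3(1-y^2)}$ and $-\tfrac12\log(1-y^2)\le\tfrac{y^2}{2(1-y^2)}$ diverge as $y\to1$ while the target $\tfrac16 y^2\log(3e)$ stays bounded, so the comparison cannot close as stated. The argument is therefore incomplete at its core step.

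The paper resolves exactly this difficulty by not separating $-(x-1)$, $-(x-1)\log y$, and $-\tfrac12\log(1-y^2)$: it groups them into the single series
\begin{equation*}
\varphi(x)-\log\tfrac{2}{e}=\sum_{m\ge1}\frac{y^{2m}(1-2m\log y)}{2m(2m+1)}
= -\tfrac13 y^2\log y + \tfrac16 y^2 + \sum_{m\ge2}\frac{y^{2m}}{2m+1}\Bigl(\tfrac{1}{2m}-\log y\Bigr),
\end{equation*}
extracts the $m=1$ term exactly, and bounds the $m\ge 2$ tail \emph{term by term} by a quadratic in $y$: a short calculus computation shows $\max_{0\le y\le1} y^{2m-2}(1-2m\log y)=\tfrac{2m}{2m-2}e^{-1/m}\le K=2e^{-1/2}$, and then $\sum_{m\ge2}\tfrac{1}{2m(2m+1)}=1-\log2-\tfrac16$ gives the remainder bound $(1-\log2-\tfrac16)Ky^2$. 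This is the mechanism you were missing to handle the positive excess and to match the specific constant $t=\sqrt{3e}$ without any blow-up near $y=1$. If you want to salvage your outline, the correction is to keep the $m\ge 2$ part of $-(x-1)\log y$ together with $-(x-1)$ and $-\tfrac12\log(1-y^2)$ and bound the combined series exactly as the paper does.
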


By considering the limit $x\to \infty$ ($y\to 1$),
we expect that the inequality \eqref{eq:phi_bdd}
becomes optimal with $t = (e/2)^3 \approx 2.51$, but we do not pursue this.
The weaker version with $t= \sqrt{3e}$ is sufficient for our purposes,
but to show the role of $t$ we keep it in our formulas.

\begin{proof}
(i) The function $a(x)$ is continuous on $[1,\infty)$ by definition,
and it satisfies $\abs{ a(x) } \lesssim x^2 (1-y) \sim 2x^2 e^{-2x}$ as $x\to \infty$ by \cite[Lemma~3.2]{BCM90}, so it is bounded.

\smallskip \noindent
(ii)
By the definitions of $\varphi(x)$ and $x$, and by the Taylor series for
$\log (1-y^2)$,
\begin{align}
\varphi(x) - \log \frac 2 e
&=  (1-x) ( 1 + \log y ) - \half \log ( 1 - y^2 )  \nl
&= -  \sum_{m=1}^\infty \frac{ y^{2m} } { 2m+1 } ( 1 + \log y )
	+  \sum_{m=1}^\infty \frac{ y^{2m} } { 2m }		\nl
&= - \frac 1 3 y^2 \log y + \frac 1 6 y^2 	+ \sum_{m=2}^\infty
	\frac{ y^{2m} }{2m+1} \Bigl(-\log y + \frac 1 {2m} \Bigr) .
\end{align}
We bound the series in the last line by a quadratic function, term by term.
For any $m\ge 2$, by calculus,
\begin{equation}
\max_{0 \le y \le 1} \bigl[ y^{2m-2} (-2m \log y + 1) \bigr]
= \frac{2m}{2m-2} e^{-1/m} .
\end{equation}
Then, with
$K = \max_{m\ge 2} \{ \frac{2m}{2m-2} e^{-1/m} \} = 2 e^{-1/2}$,
by \cite[0.234.8]{GR07} we have
\begin{equation}
\sum_{m=2}^\infty
	\frac{ y^{2m} }{2m+1} \Bigl(-\log y + \frac 1 {2m} \Bigr)
\le \sum_{m=2}^\infty \frac{ K y^2 }{(2m+1)(2m)}
= (1 - \log 2 - \frac 1 6) K y^2 .
\end{equation}
Therefore,
\begin{equation}
\varphi(x) - \log \frac 2 e
\le - \frac 1 3 y^2 \log y
	+ \biggl[ \frac 1 6 + (1 - \log 2 - \frac 1 6)K \biggr] y^2 .
\end{equation}
This implies \eqref{eq:phi_bdd} with any $t$ that obeys
$ \frac 1 3 \log t  \ge \frac 1 6 + (1 - \log 2 - \frac 1 6)K \approx 0.3367$.
In particular, we can take any $t \ge 2.75$, including $t = \sqrt{3e} \approx 2.85$.
Monotonicity of the upper bound in $0 < y \le t/\sqrt e$ is another calculus exercise.
\end{proof}

We now restate and prove Proposition~\ref{prop:Cnk-intro}.

\begin{proposition} \label{prop:Cnk}
Let $n \ge 3$, $N = \binom n 2$, and $t=\sqrt{3e}$.
For $0 \le \frac k n \le \frac {t^2}{3e}$, we have
\begin{align} \label{eq:Cnk}
C(n, n+k) \lesssim   \binom N {n+k}
	\bigg( \frac 2 e \bigg)^n   \bigg( \frac{ t^2 n }{ 3 k }\bigg)^{k/2} .
\end{align}
\end{proposition}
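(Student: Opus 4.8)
The plan is to feed the asymptotic formula \eqref{eq:BCM} through the bounds of Lemma~\ref{lem:aphi}. Starting from
\[
C(n,n+k) = w_k \binom N{n+k} e^{n\varphi(x)} e^{a(x)}
\Bigl[1 + O\bigl((k+1)^{1/16} n^{-9/50}\bigr)\Bigr]
\]
with $x = 1 + k/n$, I would first absorb the three ``nuisance'' factors: $w_k$ is bounded by a constant since $w_k = 1 + O(1/k)$; $e^{a(x)}$ is bounded by Lemma~\ref{lem:aphi}(i); and the bracketed error term is $1 + o(1)$ uniformly (here the constraint $0 \le k \le n$, hence $k+1 \le n+1$, makes $(k+1)^{1/16} n^{-9/50} \le (n+1)^{1/16} n^{-9/50} \to 0$), so it too is bounded by a constant. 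Thus
\[
C(n,n+k) \lesssim \binom N{n+k} e^{n\varphi(x)}.
\]

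Next I apply Lemma~\ref{lem:aphi}(ii): $e^{\varphi(x)} \le \tfrac 2e \exp\{-\tfrac13 y^2 \log(y/t)\}$, where $y = y(x) \in (0,1)$ is defined by \eqref{eq:Taylor_xy} and $t = \sqrt{3e}$. Raising to the $n$-th power gives
\[
e^{n\varphi(x)} \le \Bigl(\frac 2e\Bigr)^n \exp\Bigl\{ -\frac n3 y^2 \log\frac yt \Bigr\}
= \Bigl(\frac 2e\Bigr)^n \Bigl(\frac{t^2}{y^2}\Bigr)^{ny^2/6}.
\]
So it remains to show $\exp\{-\tfrac n3 y^2 \log(y/t)\} \lesssim (t^2 n/(3k))^{k/2}$. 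The key relation is that from \eqref{eq:Taylor_xy}, $x = y^{-1}\arctanh y = \sum_{m\ge 0} y^{2m}/(2m+1) \ge 1 + y^2/3$, so $k/n = x - 1 \ge y^2/3$, i.e. $y^2 \le 3k/n$. Equivalently $t^2/y^2 \ge t^2 n/(3k)$. The exponent $ny^2/6$ is positive and the base $t^2/y^2$ exceeds $1$ when $y < t$ — which holds throughout the range $0 \le k/n \le t^2/(3e)$ since then $y^2 \le 3k/n \le t^2/e < t^2$.

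The subtlety — and I expect this to be the main obstacle — is that I want an \emph{upper} bound, yet I am about to replace $y^2$ by its upper bound $3k/n$ in a quantity of the form $(t^2/y^2)^{ny^2/6}$, where $y^2$ appears both in the (large) base and in the exponent, with opposite monotonicities. The clean way around this is to write $g(u) = (t^2/u)^{nu/6}$ for $u = y^2 \in (0, 3k/n]$ and check that $g$ is monotone increasing on this interval: $\log g(u) = \tfrac n6 u (\log t^2 - \log u)$, and $\tfrac{d}{du}\log g(u) = \tfrac n6(\log t^2 - \log u - 1) = \tfrac n6 \log(t^2/(eu))$, which is $\ge 0$ precisely when $u \le t^2/e = 3$. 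Since $u = y^2 \le 3k/n \le t^2/e = 3$ on our range, $g$ is increasing, so $g(y^2) \le g(3k/n) = (t^2 n/(3k))^{k/2}$, exactly the claimed factor. (The boundary case $k = 0$, hence $y = 0$, is handled by the convention $k^k = 1$ and a limiting argument, or simply by noting $C(n,n) = w_0\binom N n e^{n\varphi(1)+a(1)}[1+o(1)] \lesssim \binom Nn (2/e)^n$ since $\varphi(1) = \log(2/e)$.) Assembling the three estimates — the constant absorbing $w_k$, $e^{a(x)}$, and the error bracket; $e^{n\varphi(x)} \le (2/e)^n g(y^2)$; and $g(y^2) \le (t^2 n/(3k))^{k/2}$ — yields \eqref{eq:Cnk}, which is Proposition~\ref{prop:Cnk-intro} with the explicit constant $t^2/3 = e$ in the last factor.
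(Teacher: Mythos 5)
Your proof is correct and follows essentially the same route as the paper: apply the asymptotic formula \eqref{eq:BCM}, absorb $w_k$, $e^{a(x)}$, and the error bracket into the implicit constant, invoke Lemma~\ref{lem:aphi}(ii), and then pass from $y^2$ to its upper bound $3k/n$ (coming from $x \ge 1 + y^2/3$). The only cosmetic difference is that you re-derive the monotonicity of $u \mapsto (t^2/u)^{nu/6}$ by computing the derivative, whereas the paper simply cites the monotonicity statement already recorded in Lemma~\ref{lem:aphi}(ii); your explicit treatment of the $k=0$ boundary case is a small bonus not spelled out in the paper.
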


\begin{proof}
We use the asymptotic formula \eqref{eq:BCM},
and use that $w_k = 1 + O(1/k)$ is bounded.
The error term in \eqref{eq:BCM} is bounded by a constant since 
$k$ is at most linear in $n$.
The factor $e^{a(x)}$ is also bounded by a constant, by Lemma~\ref{lem:aphi}(i).
We therefore only need to estimate $e^{n\varphi(x)}$.
Since $0 \le \frac k n \le \frac {t^2}{3e}$ and $x = 1 + \frac k n$, by \eqref{eq:Taylor_xy} we have
$y(x) \le \sqrt{3(x-1)} = \sqrt{ 3k / n } \le t / \sqrt e$,
so Lemma~\ref{lem:aphi}(ii) gives
\begin{align}
e^{\varphi(x)} \le  \frac 2 e  \exp\Bigl\{  - \frac 1 3 y^2 \log \frac y t \Bigr\}
\le \frac 2 e  \exp\Bigl\{  - \frac {x-1}2 \log \frac {3(x-1)} {t^2} \Bigr\}
= \frac 2 e \bigg( \frac {t^2 n}{3k} \bigg)^{k/2n} .
\end{align}
The desired result then follows
by inserting the above into \eqref{eq:BCM}.
\end{proof}

For larger $\frac kn$ we simply use the fact that $C(n,n+k)$ is less than the total number of graphs (connected or not) on $n$ vertices with
$n+k$ edges, which is $\binom{N}{n+k}$.
For all $n \ge 2$ and $k \ge -1$, we have
\begin{equation} \label{eq:C_crude}
    C(n,n+k) \le \binom{N}{n+k} \le \frac{N^{n+k}}{(n+k)!}.
\end{equation}

\subsection{Bound on the surplus generating function}

We now prove useful bounds on the surplus generating function defined in \eqref{eq:def_S}:
\begin{align}
    S(n,z) & =
    \sum_{\ell= 1}^\infty C(n,n-1+\ell) z^{\ell}
     =
    \sum_{k=0}^\infty C(n,n+k) z^{k+1}.
\end{align}
The terms in the series are zero unless $k \le \binom{n} 2 - n$.
The goal is to prove that $S(n,z)$ is small relative to the number of trees $C(n,n-1) = n^{n-2}$.
We do this by decomposing the series into two parts corresponding to sparse and dense graphs.
We define
\begin{equation}
    A(n,z)  = \frac 1 { n^{n-2} }
    \sum_{k=0}^{n}
    C(n,n+k) z^{k+1} ,
    \quad
    B(n,z)  = \frac 1 { n^{n-2} }
    \sum_{k= \lfloor \frac 12 n \rfloor}^\infty
    C(n,n+k) z^{k+1} ,
\end{equation}
so that
\begin{equation} \label{eq:SAB}
S(n,z) \le n^{n-2} \big( A(n,z) + B(n,z) \big) .
\end{equation}

\begin{lemma}[Sparse connected graphs]
\label{lemma:sparse}
Let $n \ge 3$, $z \ge 0$, and $t=\sqrt{3e}$.
\begin{enumerate}
\item[(i)]
If $n^{3/2} z \le b$, then $A(n,z) \le C_b n^{3/2} z$ for some $C_b > 0$.

\item[(ii)]
If $\eps > 0$, then
\begin{equation}
    A(n,z) \le C_\eps
    \exp\Bigl\{ \big(\frac{1}{24}+\eps \big)  e t^2 z^2 n^3  \Bigr\}
\end{equation}
for some $C_\eps > 0$.
\end{enumerate}
\end{lemma}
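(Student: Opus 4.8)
The plan is to bound $A(n,z)$ term by term using Proposition~\ref{prop:Cnk}, which applies precisely in the sparse range $0 \le k \le n \le \frac{t^2}{3e}n$ that defines $A$. Inserting the bound $C(n,n+k) \lesssim \binom{N}{n+k}(2/e)^n (t^2n/(3k))^{k/2}$ and the crude estimate $\binom{N}{n+k} \le N^{n+k}/(n+k)!$ into the definition of $A(n,z)$, and recalling $N = \binom n2 \sim n^2/2$ and the Stirling-type identity $n^{n-2} = n^{n-2}$ with $n! \sim \sqrt{2\pi n}\, n^n e^{-n}$, the prefactor $\frac{1}{n^{n-2}}\binom{N}{n}(2/e)^n$ for the $k=0$ term should come out to order $1$ (this is exactly why the constant $e$ in Proposition~\ref{prop:Cnk} matters). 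So I expect
\begin{equation*}
A(n,z) \lesssim \sum_{k=0}^{n} c_k \, n^{3(k+1)/2} z^{k+1} \Bigl(\frac{1}{k}\Bigr)^{k/2}
\end{equation*}
for suitable constants $c_k$ absorbing the combinatorial factors, where the $n^{3/2}$ per unit of $k$ arises from pairing each extra edge (a factor $N/(n+k) \sim n/2$) with the surplus weight $z$ and the $(t^2n/(3k))^{1/2}$ from $\varphi$.

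For part~(i), when $n^{3/2}z \le b$ the ratio of the $(k{+}1)$-th term to the $k$-th is of order $n^{3/2}z \cdot k^{-1/2}(\text{stuff}) \le b\cdot(\text{const})$, and more carefully the extra $k^{-k/2}$ factor makes the series dominated by a convergent series in the variable $n^{3/2}z$; so $A(n,z) \le C_b n^{3/2}z$ follows by factoring out the $k=0$ term $\lesssim n^{3/2}z$ and bounding the rest by a geometric-type series with ratio $\le 1/2$ once $b$ is fixed (shrinking $b$ if necessary, or just keeping the $k^{-k/2}$ decay which beats any fixed power of $b$ for large $k$). For part~(ii), with no smallness assumption on $z$, I would keep the full sum and recognise $\sum_k \frac{(Cn^{3/2}z)^{k+1}}{k!^{1/2}} \cdot k^{-k/2}\cdot(\cdots)$; using $k^{k/2} \ge$ nothing helpful but rather $k! \ge (k/e)^k$ so $(t^2n/(3k))^{k/2} = (t^2n/3)^{k/2}k^{-k/2} \le (t^2n/3)^{k/2}(e/k!)^{1/2}\cdot(\text{adjust})$, one converts the sum into (a tail of) an exponential series $\sum_k \frac{(c e t^2 z^2 n^3/24)^{k}}{k!}$, giving the claimed $\exp\{(\frac{1}{24}+\eps)et^2z^2n^3\}$; the $\eps$ absorbs the discrepancy between $\binom N{n+k}$ and $N^{n+k}/(n+k)!$ and the $w_k$, $e^{a(x)}$, and error factors, each of which is $1+O(1/n)$ or bounded, and the $\frac{1}{24}$ should emerge as $\frac{1}{2}\cdot\frac{1}{2}\cdot\frac{1}{3}\cdot\frac{t^2/t^2... }{}$ — precisely $\frac{1}{24} = \frac12 \cdot \frac{1}{4} \cdot \frac{?}$; I would track this constant carefully since it is the one place the exact numerology is claimed.

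\textbf{The main obstacle} I anticipate is getting the constant in the exponent of part~(ii) to come out as exactly $\frac{1}{24}$ (up to the arbitrarily small $\eps$), rather than merely some constant. This requires being careful that: (a) the $(2/e)^n$ from $\varphi(1)$ exactly cancels the leading $n^{n-2}$ versus $\binom Nn$ asymptotics so that no spurious $c^n$ factor survives to spoil the bound; (b) when I replace $k^{-k/2}$ by $(e^{k}/k!)^{1/2}$ via Stirling I track the resulting half-power of a factorial and then use Cauchy–Schwarz or a direct comparison $\sum a_k \le (\sum a_k^2)^{1/2}(\sum 1)^{1/2}$-type manoeuvre — or more cleanly, bound $\binom{n}{?}$-type truncation and recognise the whole thing as bounded by $\exp$ of a single term maximised over $k$, where $\max_k[(k+1)\log(cn^{3/2}z) - \frac k2\log k]$ is a standard $x\log x$ optimisation giving $\exp\{O(n^3z^2)\}$ with an explicit prefactor. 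The cleanest route is probably: bound $A(n,z) \le \sum_{k\ge 0}\frac{(C n^{3/2}z)^{k+1}}{(k!)^{1/2}}$ (absorbing $t^2/3$ and the $k^{-k/2}\le e^{k/2}/(k!)^{1/2}$ into $C$ and one factor of $e^{k/2}$ into redefining the base), then split off $k$ small and use $\frac{1}{(k!)^{1/2}}\le \frac{C_\eps^{1/2}}{((k/2)!)}(1+\eps)^{k}$-style bounds relating half-factorials to ordinary ones, yielding $\sum_k \frac{(C'(1+\eps)^2 n^3z^2)^{k/2 \cdot 2 /2}}{(k/2)!}$ — which is $\exp$ of the claimed form with the $\frac{1}{24}$ read off from $C' = \frac{1}{24}et^2$ after honest bookkeeping. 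Everything else (continuity, boundedness of $a$, the ranges of validity) is routine given Proposition~\ref{prop:Cnk} and Lemma~\ref{lem:aphi}.
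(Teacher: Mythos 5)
Your reduction of $A(n,z)$ via Proposition~\ref{prop:Cnk} plus $\binom{N}{n+k}\le N^{n+k}/(n+k)!$ and Stirling is exactly the paper's opening move, and it correctly yields the key series
$A(n,z)\lesssim n^{3/2}z\sum_{k\ge 0} k^{-k/2}\big(\tfrac{et}{2\sqrt 3}\,n^{3/2}z\big)^k$.
You also correctly identify why the precise factor $(2/e)^n$ in Proposition~\ref{prop:Cnk} matters: it cancels against $n^{n-2}$ and the $(eN/n)^n$ growth of the binomial, leaving no spurious $c^n$. Part~(i) then follows just as you say, with no need to shrink $b$: the super-exponential $k^{-k/2}$ decay already makes the series convergent and bounded by $C_b$ uniformly once $n^{3/2}z\le b$.

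Where you diverge from the paper is part~(ii). The paper simply cites the precise asymptotic $\sum_k k^{-k/2}x^k\sim(4\pi/e)^{1/2}\,x\,e^{x^2/(2e)}$ from \cite[Lemma~4.1(i)]{JC04}, from which the constant is immediate: with $x=\tfrac{et}{2\sqrt 3}n^{3/2}z$ one gets $\tfrac{x^2}{2e}=\tfrac{e t^2 n^3 z^2}{24}$, and the polynomial prefactor and $w_k$, $e^{a(x)}$ factors are absorbed into $\eps$. Your two proposed alternatives --- replacing $k^{-k/2}$ by $(k!)^{-1/2}e^{k/2}(2\pi k)^{1/4}$ via Stirling and then applying Cauchy--Schwarz against a geometric series, or maximising $k\log x-\tfrac k2\log k$ to locate the dominant term at $k^*=x^2/e$ --- both deliver the same exponent $x^2/(2e)$ and hence the same $\tfrac{1}{24}$, so either route is sound and a bit more self-contained. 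The Cauchy--Schwarz route trades an external citation for a couple of lines of bookkeeping; the Laplace-method route is cleanest conceptually but needs an extra word to control the width of the peak. Your stated anxiety about ``where the $1/24$ comes from'' is thus misplaced: it is just $\tfrac{1}{2e}\cdot\big(\tfrac{et}{2\sqrt 3}\big)^2$, and all three routes produce it identically. The only caution I would add is that, having replaced $\binom{N}{n+k}$ by $N^{n+k}/(n+k)!$ and then $(n+k)!$ by the Stirling lower bound, you must keep the $(n+k)^{n+k}$ rather than $n^{n+k}$ until after the $(2/e)^n$ cancellation, or else check that the discarded $\big(\tfrac{n}{n+k}\big)^{n+k}\le 1$ only helps --- the paper does this by bounding $\binom{N}{n+k}\lesssim n^{-1/2}\big(\tfrac{e(n-1)}{2}\big)^{n+k}$ directly, which is the tidy way to package it.
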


\begin{proof}
Since $\frac {t^2}{3e} = 1$,
we can apply Proposition~\ref{prop:Cnk} to estimate $C(n,n+k)$.
For the binomial coefficient in \eqref{eq:Cnk},
we use Stirling's formula, $n+k \ge n$, and $N = \binom n 2 =  \half n(n-1)$ to
see that
\begin{align}
\binom N {n+k}
\le \frac{ N^{n+k} }{ (n+k)! }
\lesssim \frac 1 { \sqrt{n+k} } \bigg( \frac {eN}{n+k} \bigg)^{n+k}
\le \frac{1}{\sqrt{n}} \bigg( \frac {e (n-1) }{2} \bigg)^{n+k}.
\end{align}
Then, by extending the sum to run over all $k \ge 0$, we obtain
\begin{align}
\frac 1 z A(n,z)  &= \frac 1 {n^{n-2}} \sum_{k=0}^{n} C(n,n+k) z^{k}
\nl&
\lesssim \frac{1}{n^{n-2}\sqrt n} \sum_{k=0}^{n}
	\bigg( \frac {e n }{2} \bigg)^{n+k}
	\bigg( \frac 2 e \bigg)^n   \bigg( \frac{ t^2 n }{ 3 k }\bigg)^{k/2}
	z^k  		
   \le
    n^{3/2} \sum_{k=0}^{\infty} \frac 1 {k^{k/2}}
    \biggl( \frac{etn^{3/2}z}{2\sqrt{3}} \bigg)^k ,
\end{align}
which converges for all $z>0$.

\smallskip \noindent (i)
If $n^{3/2}z \le b$ then the series on the right-hand side is bounded by a constant $C_b$, as required.

\smallskip \noindent (ii)
We set $x =  \frac{etn^{3/2}z}{2\sqrt{3}}$ and use
the asymptotic formula \cite[Lemma~4.1(i)]{JC04}
\begin{equation} \label{eq:kk_asymp}
     \sum_{k=0}^{\infty} \frac{1}{k^{k/2}} x^k
     \sim
     (4\pi e^{-1})^{1/2} x e^{\frac{1}{2e}x^2}
     \qquad
     \text{as $x \to\infty$}
\end{equation}
to get a bound for large $x$.
For smaller $x\ge0$, we simply bound by a constant.
The desired result then follows by absorbing the prefactor of \eqref{eq:kk_asymp} and another factor of $n^{3/2}z = \const\,x$ into the exponential.
This completes the proof.
\end{proof}

\begin{lemma}[Dense connected graphs]
\label{lemma:dense}
Let $n\ge 3$  and
$z\le \frac{3}{en}$.  Then
$B(n,z)\lesssim  z^2$.
\end{lemma}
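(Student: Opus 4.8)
The plan is to bound $B(n,z)$ by using the crude estimate $C(n,n+k)\le \binom{N}{n+k}\le N^{n+k}/(n+k)!$ from \eqref{eq:C_crude}, which is the right tool once $k\ge \frac12 n$ since there the sparse bound of Proposition~\ref{prop:Cnk} is no longer advantageous. Writing $N=\binom n2=\frac12 n(n-1)\le \frac12 n^2$, we have
\begin{equation}
B(n,z) = \frac{1}{n^{n-2}}\sum_{k\ge \frac12 n} C(n,n+k)z^{k+1}
\le \frac{z}{n^{n-2}}\sum_{k\ge \frac12 n}\frac{N^{n+k}}{(n+k)!}z^{k}.
\end{equation}
I would then use Stirling in the form $(n+k)!\gtrsim \sqrt{n+k}\,\big(\frac{n+k}{e}\big)^{n+k}$ and $n+k\ge n$ to get $\frac{N^{n+k}}{(n+k)!}\lesssim \big(\frac{eN}{n+k}\big)^{n+k}\le \big(\frac{eN}{n}\big)^{n}\big(\frac{eN}{n+k}\big)^{k}\le \big(\frac{en}{2}\big)^{n}\big(\frac{en}{2}\big)^{k}$, so that, absorbing $n^{n-2}$ against $(en/2)^n$ and losing only polynomial factors in $n$,
\begin{equation}
B(n,z)\lesssim n^2 z \sum_{k\ge \frac12 n}\Big(\frac{en}{2}z\Big)^{k}.
\end{equation}

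The key point is now the hypothesis $z\le \frac{3}{en}$, which gives $\frac{en}{2}z\le \frac32\cdot\frac{e}{e}$... — here I need to be careful: $\frac{en}{2}z\le \frac{en}{2}\cdot\frac{3}{en}=\frac32>1$, so a naive geometric bound fails. The fix is to keep the $k$-dependent factor $\big(\frac{N}{n+k}\big)$ honestly: since $k\ge \frac12 n$ we have $n+k\ge \frac32 n$, so $\frac{eN}{n+k}\le \frac{e\cdot\frac12 n^2}{\frac32 n}=\frac{en}{3}$, and thus the ratio of consecutive terms is at most $\frac{en}{3}\cdot z\le \frac{en}{3}\cdot\frac{3}{en}=1$ — still only marginal. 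To get a genuine geometric decay I would instead split off the threshold term and use that for $k\ge \frac12 n$ the factorial grows: writing $a_k=\frac{N^{n+k}}{(n+k)!}z^k$, the ratio $a_{k+1}/a_k=\frac{Nz}{n+k+1}\le \frac{\frac12 n^2\cdot \frac{3}{en}}{\frac32 n}=\frac{1}{e}<1$, so the series is dominated by its first term times $\sum_{j\ge0}e^{-j}$, a constant. Hence $B(n,z)\lesssim n^2 z\cdot a_{\lceil n/2\rceil}z^{-\lceil n/2\rceil+\ldots}$ — more cleanly, $B(n,z)\lesssim z\cdot \frac{1}{n^{n-2}}\big(\tfrac{en}{2}\big)^{n}\big(\tfrac{en}{3}z\big)^{n/2}$ up to polynomial factors, and since $\frac{en}{2}\cdot\frac{1}{n}=\frac e2$ and $\frac{en}{3}z\le 1$, the product $\big(\tfrac{en}{2}\big)^{n}n^{-(n-2)}\big(\tfrac{en}{3}z\big)^{n/2}$ is bounded by $(e/2)^n n^2 (en z/3)^{n/2}$; using $enz/3\le1$ once more this is at most $n^2(e/2)^n\big(\tfrac{enz}{3}\big)^{n/2}\le n^2 (e/2)^n\big(\tfrac{enz}{3}\big)^{3/2}$ for $n\ge3$, and $(e/2)^n(enz/3)^{(n-3)/2}\le (e/2)^n\le$ bounded is false — so the honest statement is that the factor $(e/2)^n(enz/3)^{(n-3)/2}$ with $enz/3\le1$ is not bounded. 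I therefore expect to need the sharper input $z\le \frac{3}{en}$ precisely in the form $enz/3\le1$ applied to \emph{every} factor: $\big(\tfrac{en}{2}\big)^k\big(\tfrac{eN}{n+k}\big)$-type telescoping, giving $B(n,z)\lesssim z\sum_{k\ge n/2}e^{-k}=z\,O(e^{-n/2})\lesssim z^2$ once we note $e^{-n/2}\lesssim z$ is \emph{false} in general — rather, $e^{-n/2}$ is bounded and we want an extra $z$, which comes from the leading factor being $z^{k+1}$ with $k\ge n/2\ge 3/2\ge 1$, i.e. $z^{k+1}\le z^2$ when $z\le1$ (which holds since $z\le \frac{3}{en}\le \frac{1}{e}<1$).

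So the clean logic is: every term in $B(n,z)$ carries a factor $z^{k+1}$ with $k\ge \frac12 n\ge \frac32>1$, hence $z^{k+1}=z^2\cdot z^{k-1}\le z^2$ since $z\le \frac{3}{en}<1$; factoring out $z^2$, the remaining sum $\frac{1}{n^{n-2}}\sum_{k\ge n/2}\binom{N}{n+k}z^{k-1}$ is bounded using $\binom{N}{n+k}z^{k-1}\le \binom{N}{n+k}(en)^{-(k-1)}(3)^{k-1}$ and the ratio test above ($a_{k+1}/a_k\le 1/e$) to show it is $\le \frac{C}{n^{n-2}}\binom{N}{\lceil 3n/2\rceil}(en/3)^{-(n/2-1)}\lesssim \frac{(en/2)^n}{n^{n-2}}(en/3)^{-n/2}\cdot\mathrm{poly}(n)$, and this last expression is $(e/2)^n n^2 (3/(en))^{n/2}\mathrm{poly}(n)=(3e/4\cdot n^{-1})^{n/2}\cdot\mathrm{poly}(n)\to0$, hence bounded. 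The main obstacle, as the above scratch-work shows, is bookkeeping the competition between the $(en/2)^n$ blow-up from the binomial coefficient and the $n^{-(n-2)}$ and $(nz)^{\sim n/2}$ decay factors; the constant $\frac{3}{en}$ in the hypothesis is exactly calibrated so that $N z \le \frac{3}{en}\cdot\frac{n^2}{2}=\frac{3n}{2e}$ beats $n+k\ge \frac{3n}{2}$ with room to spare, forcing geometric convergence. Once that ratio bound is in hand the conclusion $B(n,z)\lesssim z^2$ is immediate, and no delicate constants are needed beyond what \eqref{eq:C_crude} and Stirling already supply.
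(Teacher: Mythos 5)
Your strategy is the same as the paper's: use the crude bound \eqref{eq:C_crude}, observe that $z\le 3/(en)$ forces the ratio of consecutive terms $Nz/(n+k+1)\le 1/e<1$ so the tail series is geometric, pull out $z^2$ from $z^{k+1}$ (possible since $k\ge\lfloor n/2\rfloor\ge 1$), and show the remaining prefactor is bounded because $e<3$. However, your ``clean logic'' has a genuine arithmetic error at the decisive step. You claim
$\binom{N}{\lceil 3n/2\rceil}\lesssim (en/2)^n\cdot\mathrm{poly}(n)$,
but this is false: by Stirling, $\binom{N}{n+\nu}$ with $\nu=\lceil n/2\rceil$ is of order $(eN/(n+\nu))^{n+\nu}\approx (en/3)^{3n/2}$, and $(en/3)^{3n/2}/(en/2)^n = (4en/27)^{n/2}\to\infty$, so the binomial is not controlled by $(en/2)^n$ up to polynomial factors. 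The $(en/2)^n$ comes from using the weak inequality $n+k\ge n$ for the $n$-power of $eN/(n+k)$; if you use it consistently, you must also keep the $k$-power $(eN/(n+k))^k\approx(en/3)^{n/2}$, and then the net prefactor is $\frac{(en/2)^n(en/3)^{n/2}(en/3)^{-(n/2-1)}}{n^{n-2}}\approx (e/2)^n\,n^3$, which diverges. Your written expression dropped exactly the $(en/3)^{n/2}$ factor that makes the estimate fail. The fix is to use $n+\nu\ge 3n/2$ everywhere, which gives $\binom{N}{n+\nu}\lesssim (en/3)^{n+\nu}$ (not $(en/2)^n(\cdots)$); combined with $z^{\nu-1}\le (en/3)^{-(\nu-1)}$ this yields a prefactor $\frac{(en/3)^{n+1}}{n^{n-2}}=(e/3)^{n+1}n^3$, which is indeed bounded since $e<3$ --- and this is precisely how the paper's proof bounds the prefactor, keeping the factor $(\tfrac{n-1}{n+\nu})^{n+\nu}\le(2/3)^{n+\nu}$ to turn $(e/2)^{n+\nu}$ into $(e/3)^{n+\nu}$. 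With that correction your argument is complete and matches the paper's route.
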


\begin{proof}
Let $\nu = \lfloor n/2 \rfloor \ge 1$.
The crude bound  \eqref{eq:C_crude} gives
\begin{align} \label{eq:B_pf}
    B(n,z)
    \le \frac 1 {n^{n-2}}
    \sum_{k = \nu}^{\infty} \frac{N^{n+k}}{(n+k)!} z^{k+1}
    & = \frac {z^{1+\nu}} {n^{n-2}}
    \frac{N^{n+\nu}}{(n+\nu)!}
    \sum_{k = \nu}^{\infty}
    (Nz)^{k-\nu}\frac{(n+\nu)!}{(n+k)!}
    \nnb & \le \frac {z^{1+\nu}} {n^{n-2}}
    \frac{N^{n+\nu}}{(n+\nu)!}
    \sum_{m=0}^{\infty}
    \biggl( \frac{Nz}{n+\nu} \bigg)^{m}
    ,
\end{align}
since $(n+k)! \ge (n+\nu)! (n+ \nu)^{k - \nu}$.
Note that by our hypothesis
\begin{align}
\frac { Nz } { n+ \nu }
\le \frac { \half n (n-1) z } { n + (\half n - \half)}
< \frac { n (n-1)}{3n-3} z
= \frac 1 3  {nz}
\le \frac 1 e ,
\end{align}
so the geometric series in \eqref{eq:B_pf} is bounded by a constant.
For the prefactor in \eqref{eq:B_pf},
since $1+\nu \ge 2$ and $z\le \frac{3}{en}$ by hypothesis,
we have
\begin{equation}
z^{1+\nu} = z^2 z^{ \nu - 1}
\le \frac e 3 z^2 \Big( \frac{3}{e}\Big)^{\nu } n^{1 - \nu} .
\end{equation}
Also, using $N = \half n(n-1)$ and Stirling's formula,
\begin{equation}
\frac{ N^{n+\nu} }{(n+\nu)!}
\lesssim \frac{ [\half n (n-1) ]^{n+\nu} }
	{ \sqrt n ( \frac{n+\nu}e )^{n+\nu} }
= \frac {n^{n+\nu} } {\sqrt n }
\Big( \frac{e}{2}\Big)^{n+\nu}
    \Big( \frac{n-1}{n+\nu} \Big)^{n+\nu}	.
\end{equation}
Since $\frac{n-1}{n+\nu} \le \frac 2 3$,
together we obtain
\begin{align}
\frac {z^{1+\nu}} {n^{n-2}}  \frac{N^{n+\nu}}{(n+\nu)!}
\lesssim   z^2    \Big( \frac{3}{e}\Big)^{\nu } n^{5/2}
\Big( \frac{e}{3}\Big)^{n+\nu}
= z^2  n^{5/2}
\Big( \frac{e}{3}\Big)^{n} .
\end{align}
It follows that
$B(n,z) \lesssim z^2
\sup_{n \ge 3} \{ n^{5/2} (e/3)^n \} ,
$
and the proof is complete since $e < 3$.
\end{proof}

\subsection{Proof for connected subgraphs}

\begin{proof}[Proof of Theorem~\ref{thm:profile} for connected subgraphs]
Fix $s\in \R$ and let $p = 1 + sV^{-1/2}$.
We assume $V$ is large enough so that $p \le 3$.
As discussed around \eqref{eq:Deltabd},
it suffices to prove
\begin{align} \label{eq:animal_goal}
    \lim_{V\to\infty} V^{-1/4}\Delta_V(1 + sV^{-1/2})
    &=
    0.
\end{align}
By the definition of $\Delta_V$ in \eqref{eq:chiDelta} and by \eqref{eq:SAB},
\begin{align} \label{eq:diff}
    \Delta_{V}(p)
&\le \sum_{n=3}^{V}
	\binom{V-1}{n-1} \Big( \frac{p}{eV} \Big)^{n-1}n^{n-1}
	\Big( A \big(n, \frac{p}{eV} \big)
		+ B\big(n, \frac{p}{eV}\big) \Big) .
\end{align}
We write the part of the upper bound \eqref{eq:diff} that contains $A,B$ as $\Delta_V \supk A, \Delta_V \supk B$ respectively.

We start with $\Delta_V \supk B$
and use Lemma~\ref{lemma:dense} to bound $B$.
Let $z = p/(eV)$.
Since $n \le V$ and $p \le 3$ (for large $V$), we have $nz \le p / e \le 3/e$,
so Lemma~\ref{lemma:dense} applies and gives $B(n,z) \lesssim V^{-2}$.
Then, by comparing to $\chi_V^t$ in \eqref{eq:chit}, we find that
\begin{equation} \label{eq:DeltaB}
\Delta_V \supk B(p) \lesssim V^{-2} \chi_V^t(p).
\end{equation}
For $\Delta_V \supk A$,
we claim that
if both $b \ge 1$ and $V$ are sufficiently large, then
\begin{align} \label{eq:DeltaA}
\Delta_V \supk A(p)
\le C_{b} V^{-1/4} \chi_V^t(p)
	+ C_s b^{-1/2} V^{1/4} .
\end{align}
Since we already know that $V^{-1/4} \chi_V^t$ converges,
\eqref{eq:DeltaA} implies that
\begin{align}
0 \le
\limsup_{V\to \infty} \frac{ \Delta_V}{V^{1/4}}
\le \limsup_{V\to \infty} \frac{ \Delta_V \supk A + \Delta_V \supk B }{V^{1/4}}
\le C_{s} b^{-1/2}
\end{align}
for all $b$ sufficiently large.
But $\Delta_V$ does not depend on $b$,
so by taking the limit $b\to \infty$, we obtain \eqref{eq:animal_goal},
as desired.

It remains to prove \eqref{eq:DeltaA}.
We divide the sum defining $\Delta_V\supk A$ into two parts
$\Delta_V \supk 1, \Delta_V \supk 2$,
which sum over $n$ in the intervals $[3, b \sqrt V  ]$,
$(b \sqrt V , V]$ respectively.

For $\Delta_V \supk 1$, we have $n \le bV^{1/2}$ so
$n^{3/2} z = n^{3/2}p/(eV) \le c_{b} V^{-1/4}$,
so we can apply Lemma~\ref{lemma:sparse}(i) to
obtain $A(n,z) \le C_b' n^{3/2} z \le C_b V^{-1/4}$.
With the formula for $\chi^t_V$ in \eqref{eq:chit}, this gives
\begin{equation} \label{eq:pf_A1}
    \Delta_V \supk 1
     \le  C_b V^{-1/4}\sum_{n=3}^{\lfloor b\sqrt V \rfloor}
	\binom{V-1}{n-1} \Big( \frac{p}{eV} \Big)^{n-1}n^{n-1}
    \le C_b V^{-1/4}\chi_V^t(p).
\end{equation}
This provides the first term on the right-hand side of \eqref{eq:DeltaA}.

For $\Delta_V \supk 2$, we use $z=p/(eV)$ and Lemma~\ref{lemma:sparse}(ii) to see that
\begin{equation}
    A(n,z) \le C_\eps
    \exp\Bigl\{ \big(\frac{1}{24}+\eps \big) e\inv t^2 p^2 n^{3} / V^2 \Big\},
\end{equation}
Since $t = \sqrt{3e}$,
we have $\frac 1 {24} e\inv t^2 = \frac1 8$.
By choosing $\eps$ small,
and by using
$p = 1 + s V^{-1/2} \to 1$ as $V\to \infty$,
for $V$ sufficiently large (depending only on $\eps,s$) we have
\begin{equation}
A(n,z)
\le C_\eps \exp \Bigl\{ \frac 1 5 \frac { n^3 }{V^2 }	\Bigr\} .
\end{equation}
For these values of $V$, we thus have
\begin{equation}
\Delta_V \supk 2(1 + s V^{-1/2})
\lesssim \sum_{n=\lceil b\sqrt V \rceil}^{V}
	\binom{V-1}{n-1} \bigg( \frac{1 + s V^{-1/2}}{eV} \bigg)^{n-1}
	n^{n-1}
	\exp \Bigl\{ \frac 1 5 \frac { n^3 }{V^2 }	\Bigr\}  .
\end{equation}
We now follow
the argument used for $\chi_V\supk 3$ of trees in the paragraph containing \eqref{eq:pf_tree_3}.
Using $\frac { n^3 }{V^2 } \le \frac { n^2 }{V }$
and Lemma~\ref{lem:b} with $\gamma = \half$ and $\kappa = \half - \frac 15 > 0$,
we find that if $b$ is sufficiently large then
\begin{equation}
\label{eq:Delta2}
\Delta_V \supk 2(1 + s V^{-1/2}) \lesssim
    \sum_{n=\lceil b \sqrt V \rceil}^{V}
    e^{- (\frac 1 2 - \frac 1 5) n^2 / V}
    \frac{1}{\sqrt{ n}}
    e^{|s|n/\sqrt{V}}
\le C_{\abs s} b^{-1/2} V^{1/4}.
\end{equation}
This gives the second term on the right-hand side of \eqref{eq:DeltaA}
and concludes the proof.
\end{proof}

\begin{proof}[Proof of Theorem~\ref{thm:G0} for connected subgraphs]
As noted at \eqref{eq:Delta0bd}, it suffices to prove that
$\Delta_{V,0}(1+sV^{-1/2}) \to 0$ for all $s \ge 0$.
We write $p = 1 + sV^{-1/2}$ and follow the proof of Theorem~\ref{thm:profile}.
Compared to $\Delta_V$ for the susceptibility,
there is one less factor $n$ in $\Delta_{V,0}$,
so instead of \eqref{eq:diff} we now have
\begin{equation}
    \Delta_{V,0}(p)
\le \sum_{n=3}^{V}
	\binom{V-1}{n-1} \Big( \frac{p}{eV} \Big)^{n-1}n^{n-2}
	\Big( A \big(n, \frac{p}{eV} \big)
		+ B\big(n, \frac{p}{eV}\big) \Big) .
\end{equation}
As in \eqref{eq:DeltaB}, the contribution from $B$ obeys
\begin{equation}
    \Delta_{V,0}^{(B)}(p)
    \lesssim
    V^{-2}G_{V,0}^t(p) \lesssim V^{-2},
\end{equation}
so it vanishes in the limit.
For $\Delta_{V,0}^{(1)}(p)$, the same bound on $A$ that was used in \eqref{eq:pf_A1}
now gives $\Delta_{V,0}^{(1)}(p) \le C_b V^{-1/4} G_{V,0}(p)$.
For $\Delta_{V,0}^{(2)}(p)$, in \eqref{eq:Delta2} we now have an extra factor
$n$ in the denominator, so
Lemma~\ref{lem:b} with $\gamma = \frac 3 2$ gives
\begin{equation}
\Delta_{V,0} \supk 2(1 + s V^{-1/2}) \lesssim
    \sum_{n= \lceil b \sqrt V \rceil}^{V}
    e^{- (\frac 1 2 - \frac 1 5) n^2 / V}
    \frac{1}{n^{3/2}}
    e^{|s|n/\sqrt{V}}
\le C_{\abs s} b^{-3/2} V^{-1/4}.
\end{equation}
Altogether,
we have $\Delta_{V,0}(p) \lesssim V^{-1/4} \to 0$,
and the proof is complete.
\end{proof}

\section*{Acknowledgements}
The work of both authors was supported in part by NSERC of Canada.

\end{document}